\numberwithin{equation}{section}
\theoremstyle{plain}
\newtheorem{Prop}{Proposition}[section]
\newtheorem{Thm}[Prop]{Theorem}
\newtheorem*{Thm*}{Theorem}
\newtheorem{Lem}[Prop]{Lemma}
\newtheorem{Cor}[Prop]{Corollary}
\theoremstyle{definition}
\newtheorem{Def}[Prop]{Definition}
\theoremstyle{remark}
\newtheorem{Rem}[Prop]{Remark}
\def\vint_#1{\mathchoice%
          {\mathop{\kern 0.2em\vrule width 0.6em height 0.69678ex
depth -0.58065ex
                  \kern -0.8em \intop}\nolimits_{\kern -0.4em#1}}%
          {\mathop{\kern 0.1em\vrule width 0.5em height 0.69678ex
depth -0.60387ex
                  \kern -0.6em \intop}\nolimits_{#1}}%
          {\mathop{\kern 0.1em\vrule width 0.5em height 0.69678ex
              depth -0.60387ex
                  \kern -0.6em \intop}\nolimits_{#1}}%
          {\mathop{\kern 0.1em\vrule width 0.5em height 0.69678ex
depth -0.60387ex
                  \kern -0.6em \intop}\nolimits_{#1}}}
\def\vintslides_#1{\mathchoice%
          {\mathop{\kern 0.1em\vrule width 0.5em height 0.697ex depth -0.581ex
                  \kern -0.6em \intop}\nolimits_{\kern -0.4em#1}}%
          {\mathop{\kern 0.1em\vrule width 0.3em height 0.697ex depth -0.604ex
                  \kern -0.4em \intop}\nolimits_{#1}}%
          {\mathop{\kern 0.1em\vrule width 0.3em height 0.697ex depth -0.604ex
                  \kern -0.4em \intop}\nolimits_{#1}}%
          {\mathop{\kern 0.1em\vrule width 0.3em height 0.697ex depth -0.604ex
                  \kern -0.4em \intop}\nolimits_{#1}}}
\newcommand{\intav}{\vint}
\newcommand{\aveint}[2]{\mathchoice
          {\mathop{\kern 0.2em\vrule width 0.6em height 0.69678ex
depth -0.58065ex
                  \kern -0.8em \intop}\nolimits_{\kern -0.45em#1}^{#2}}%
          {\mathop{\kern 0.1em\vrule width 0.5em height 0.69678ex
depth -0.60387ex
                  \kern -0.6em \intop}\nolimits_{#1}^{#2}}%
          {\mathop{\kern 0.1em\vrule width 0.5em height 0.69678ex
depth -0.60387ex
                  \kern -0.6em \intop}\nolimits_{#1}^{#2}}%
          {\mathop{\kern 0.1em\vrule width 0.5em height 0.69678ex
depth -0.60387ex
                  \kern -0.6em \intop}\nolimits_{#1}^{#2}}}
\DeclareMathOperator{\spn}{span}
\DeclareMathOperator{\supp}{supp}
\DeclareMathOperator{\diam}{diam}
\DeclareMathOperator{\dv}{div}
\DeclareMathOperator{\Tr}{Tr}
\newcommand{\set}[2]{\left\{#1 : #2\right\}}
\newcommand{\emp}{\emptyset}
\newcommand{\R}{\mathbb{R}}
\newcommand{\del}{\partial}
\newcommand{\eps}{\varepsilon}
\newcommand{\abs}[1]{\left| #1 \right|}
\newcommand{\norm}[1]{\left\| #1 \right\|}
\newcommand{\inv}[1]{{#1}^{-1}}
\newcommand{\dx}{\, dx}
\newcommand{\loc}{\text{\rm loc}}
\newcommand{\Om}{\Omega}
\newcommand{\om}{\omega}
\newcommand{\inp}[2]{\big\langle #1,#2\big\rangle}
\newcommand{\gr}{\nabla}
\newcommand{\wto}{\rightharpoonup}
\newcommand{\h}{\mathcal{H}}
\newcommand{\hh}{\mathbb{H}}
\newcommand{\X}{\mathfrak{X} }
\newcommand{\Xu}{\X u}
\newcommand{\XX}{\X\X}
\newcommand{\dvh}{\dv_{H}}
\newcommand{\A}{\mathcal{A}}
\newcommand{\Aeps}{\mathcal{A\,_\eps}}
\newcommand{\F}{\textsc{F}}
\newcommand{\weight}{\F\left(|\X u|\right)}
\newcommand{\hw}[2]{HW^{#1,#2}}
\newcommand{\dhh}{d_{\hh^n}}
\newcommand{\normh}[1]{\|#1\|_{\hh^n}}
\title[On local Lipschitz regularity for Quasilinear equations in the Heisenberg Group]
{On local Lipschitz regularity for Quasilinear equations in the Heisenberg Group}
\author[Shirsho Mukherjee]{Shirsho Mukherjee}
\address[S.\ Mukherjee]{Department of Mathematics, 
Johns Hopkins University, 404 Krieger Hall, 3400 N. Charles Street, Baltimore MD 21218, USA.}
\email{smukhe20@jhu.edu}
\thanks{2010 \textit{Mathematics Subject Classification.}  Primary 35R03, 35J62, 35J70, 35J75.  \\
\textit{Key words and Phrases.} Heisenberg Group, Quasilinear equation, Lipschitz regularity.}
\begin{document}

\begin{abstract}
The goal of this article is to establish local Lipschitz continuity of solutions for 
a class of degenerated sub-elliptic equations of divergence form, in the 
Heisenberg Group.  The considered hypothesis for the growth and ellipticity condition, is a natural generalisation of the sub-elliptic $p$-Laplace equation and more general 
quasilinear equations with polynomial or exponential type growth.
\end{abstract}

\date{\today}
\maketitle
\setcounter{tocdepth}{2}
\phantomsection
\addcontentsline{toc}{section}{Contents}

\section{Introduction}\label{sec:Introduction}
 Lipschitz continuity of weak solutions for variational problems in the Heisenberg Group $\hh^n$, has been studied in \cite{Zhong}, where equations with growth conditions of $p$-Laplacian type was considered. 
The purpose of this paper is to reproduce the result, for a larger class of more general quasilinear equations.

In a domain $\Om\subset \hh^n$, for $n\geq 1$, we consider the equation 
\begin{equation}\label{eq:eq}
 \dvh \A(\X u)=\sum_{i=1}^{2n} X_i(\A_i(\X u))= 0, 
\end{equation}
where $X_1,\ldots,X_{2n}$ are the horizontal vector fields, 
$ \X u = (X_1u,\ldots,X_{2n}u)$ is the horizontal gradient of a function $ u :\Om \to \R$, the horizontal divergence $\dvh$ defined similarly and 
$\A = (\A_1,\A_2,\ldots,\A_{2n}):\R^{2n} \to \R^{2n}$ for given data
$\A_i \in C^1(\R^{2n})$. We denote  $D\A(z)$ as the $2n\times 2n$ Jacobian matrix $ (\del \A_i(z)/\del z_j)_{ij}$ for $z \in \R^{2n}$. In addition, we assume that $D\A(z)$ is symmetric and 
satisfies
\begin{equation}\label{eq:str cond}
\begin{aligned}
 \frac{g(|z|)}{|z|}\,|\xi|^2\, \leq \,\,&\inp{D\A(z)\,\xi}{\xi}
 \,\leq\, L\,\frac{g(|z|)}{|z|}\,|\xi|^2;\\
 &|\A(z)|\,\leq\, L\, g(|z|),
 \end{aligned}
\end{equation}
for every $ z,\xi \in \R^{2n}$, where $L\geq 1$ and $g:[0,\infty)\to [0,\infty)$ such that $g\in C^1((0,\infty)),\, g(0) = 0$ and there exists constants $ g_0 \geq \delta > 0$, such that
\begin{equation}\label{eq:g prop}
 \delta   \leq \frac{tg'(t)}{g(t)} \leq   g_0, \quad\text{for all}\ t>0.
\end{equation}

In the Euclidean setting, conditions \eqref{eq:str cond} and \eqref{eq:g prop} have been introduced by Lieberman \cite{Lieb--gen}, in order to produce a natural extension of the 
structure conditions for elliptic operators in divergence form previously considered in Ladyzhenskaya-Ural'tseva \cite{Lady-Ural}, which in his words is "in a sense, the best generalization". We refer to \cite{Simon,Dib,Tolk,Gia-Giu--min,Gia-Giu--div,Uhlen,Evans,Lewis} and references therein, for 
earlier works on regularity theory of elliptic equations in divergence form, 
including the 
$p$-Laplace equations in the setting of the Euclidean spaces. 

A prominent special case appears from minimization of the scalar variational integral
$$ I(u) = \int_\Om G(|\X u|)\dx ,$$
where $G(t) = \int^t_0 g(s)\,ds$ with $g$ satisfying \eqref{eq:g prop}. The corresponding Euler-Lagrange equation 
\begin{equation}\label{eq:minprob}
\dvh \Big( g(|\X u|)\,\frac{\X u}{|\X u|}\Big)= 
\sum_{i=1}^{2n} X_i\Big( g(|\X u|)\,\frac{X_i u}{|\X u|}\Big) =0, 
\end{equation}
forms a prototype example of the equation 
\eqref{eq:eq} with $\A(z)=zg(|z|)/|z|$; in this case, by explicit computation and 
the condition \eqref{eq:g prop}, 
it is not difficult to show that 
$$ \min\{1,\delta\}\frac{g(|z|)}{|z|}\,|\xi|^2\, \leq \,\inp{D\A(z)\,\xi}{\xi}
 \,\leq\, \max\{1,g_0\}\frac{g(|z|)}{|z|}\,|\xi|^2$$
for every $z,\xi\in\R^{2n}$, 
which resembles \eqref{eq:str cond}. In particular, if $g(t) = t^{p-1}$ for 
$1<p<\infty$, then $g$ satisfies \eqref{eq:g prop} with $\delta=p-1=g_0$ and 
\eqref{eq:minprob} becomes the sub-elliptic $p$-Laplace equation 
$ \dvh (|\X u|^{p-2}\X u) = 0$.
The condition  
\eqref{eq:g prop} can appear naturally if one considers defining
\begin{equation}\label{eq:defg0}
 \delta \,= \,\inf_{t>0}\ \frac{tg'(t)}{g(t)} \quad \text{and}\quad 
g_0\, =\, \sup_{t>0}\ \frac{tg'(t)}{g(t)}.
\end{equation}
However, positivity and finiteness of the constants in \eqref{eq:defg0}, are  
essential and the techniques of this paper do not apply to the borderline cases e.g. 
$\delta=0$. Hence, more singular 
equations like $\dvh(\X u/|\X u|)=0$ or $\dvh(\X u/\sqrt{1+|\X u|^2})=0$ are 
excluded from our setting. 
 
The conditions \eqref{eq:str cond} and \eqref{eq:g prop} encompass 
 quasilinear equations for a large class of structure function $g$. Some 
 natural examples include functions having growth similar to that of power-like functions and 
there logarithmic perturbations. We enlist two particular examples:  
\begin{align*}
&(1) \ \ g(t) = (e+t)^{a+b\sin(\log\log(e+t))}-e^a\qquad \text{for}\ b>0, a\geq 1+b\sqrt{2}\\
&(2) \ \ g(t) = t^\alpha(\log(a+t))^\beta \qquad\qquad\qquad\quad\, \text{for}
\ \alpha,\beta>0, a\geq 1, 
\end{align*}
see \cite{Fusco-Sb, Mar2}. 
In addition, multiple candidates satisfying condition 
\eqref{eq:g prop} can be glued together to form the function $g$. 
A suitable gluing of the monomials 
$t^{\alpha-\eps}, t^\alpha$ and $t^{\beta+\eps}$ for $\beta>\alpha>\eps$  as 
shown in \cite{Lieb--gen}, can be constructed in such a way that certain 
non-standard growth conditions (so called $(p,q)$-growth condition) of 
Marcellini \cite{Mar1}, can also be analyzed. 

Regularity theory in the sub-elliptic setting goes back to the seminal work 
of H\"ormander \cite{Hor} in 1967, from which one can verify that sub-elliptic linear operators are hypoelliptic and hence, distributional solutions of sub-Laplace equation are smooth. 
Since then, regularity of quasilinear equations in the sub-elliptic setting, has been a subject of extensive investigation throughout the following decades. We refer to 
\cite{Cap--reg,C-D-G,Cap-Garo,Foglein,Dom-Man--reg,Dom-Man--cordes,Marchi,Man-Min,Dom} etc. for earlier results. Local Lipschitz continuity of weak solutions for $p$-Laplace equation in $\hh^n$, 
has been recently proved in \cite{Zhong}. The techniques used in there, paves the 
way for this paper.

The natural domain for the weak solution of \eqref{eq:eq} is the 
Horizontal Orlicz-Sobolev space $HW^{1,G}(\Om)$ (see Section \ref{sec:Preliminaries} for details), defined similarly as the Horizontal Sobolev space 
$HW^{1,p}(\Om)$ as in \cite{Man-Min, Min-Z-Zhong, Zhong}. The natural class of metrics 
for localizing the estimates, is the class of homogeneous metrics equivalent to the CC-metric (see Section \ref{sec:Preliminaries}). 

The following theorem is the main result of this paper. 
\begin{Thm}\label{thm:mainthm}
 If $ u \in \hw{1}{G}(\Om)$ is a weak solution of equation \eqref{eq:eq}
 equipped with the structure condition \eqref{eq:str cond}, where 
 $G(t) = \int^t_0 g(s)\,ds$ and  
 $g$ satisfies \eqref{eq:g prop} for some $g_0\geq \delta>0$, then 
 $\X u \in L^\infty_\loc(\Om, \R^{2n})$. Moreover, for any CC-metric ball $B_r \subset \Om $, we have the estimate 
 \begin{equation}\label{eq:locbound}
  \sup_{B_{\sigma r}}\ G(|\X u|)\leq \frac{c}{(1-\sigma)^Q}\intav_{B_r}G(|\X u|)\dx 
 \end{equation}
 for any $0<\sigma<1$, where $c = c(n,\delta,g_0,L) > 0 $ is a constant. 
\end{Thm}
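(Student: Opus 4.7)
My plan is to adapt the scheme developed in \cite{Zhong} for the sub-elliptic $p$-Laplacian to the present $g$-growth setting. As a first step, regularize by replacing $g$ with an approximant $g_\eps \in C^1([0,\infty))$ that satisfies \eqref{eq:g prop} uniformly in $\eps$ and for which $g_\eps(t)/t$ is bounded below by a positive constant depending on $\eps$. The associated operator is uniformly sub-elliptic with smooth coefficients, so by H\"ormander's theorem its weak solutions $u^\eps$ are smooth; standard variational comparison yields $u^\eps \to u$ in $\hw{1}{G}_\loc$. It therefore suffices to prove \eqref{eq:locbound} for $u^\eps$ with constants independent of $\eps$, and in the sequel I write $u$ for $u^\eps$.

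Differentiate the equation along the horizontal direction $X_k$: using that $[X_k,X_j]$ is a nonzero constant multiple of the vertical vector field $T$ exactly when $|k-j|=n$, one obtains a linear equation for $v := X_k u$ of the schematic form $\sum_{i,j} X_i(a_{ij} X_j v) = \mathcal R$, where $a_{ij}(z) = \del\A_i(z)/\del z_j$ and the right-hand side $\mathcal R$ involves $Tu$. Testing with $\eta^2 \Psi(|\X u|) X_k u$, summing over $k$, and using \eqref{eq:str cond} together with \eqref{eq:g prop} (which makes $g$ behave like a power up to multiplicative constants and supplies Young-type inequalities on $G$ and its conjugate needed for absorption) yields a horizontal Caccioppoli estimate of the form
\[
\int \eta^2\frac{g(|\X u|)}{|\X u|}|\X u|^{2s}|\XX u|^2\dx
\le c\int |\gr\eta|^2 g(|\X u|)|\X u|^{2s+1}\dx
+ c\int \eta^2 (Tu)^2 \frac{g(|\X u|)}{|\X u|}|\X u|^{2s}\dx,
\]
together with its counterpart obtained by differentiating along $T$.

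The central obstacle, absent in the Euclidean theory, is the vertical term on the right. Following the strategy of \cite{Zhong}, I would rewrite $(Tu)^2$ via $T = c[X_k,X_{k+n}]$, integrate by parts to move one horizontal derivative off $Tu$, and then employ the equation for $Tu$ itself; the resulting bound exchanges vertical derivatives for horizontal ones at the price of a multiplier that is strictly less than one exactly when the structure exponents $\delta,g_0$ stay away from $0$ and $\infty$, which is precisely the case excluded after \eqref{eq:defg0}. A small-parameter absorption then closes the two Caccioppoli estimates into a reverse H\"older-type inequality for $G(|\X u|)$ over concentric CC-balls, expressed entirely in terms of the Orlicz data.

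Finally, combine this reverse H\"older inequality with the Heisenberg Sobolev--Poincar\'e inequality on CC-balls and run a Moser-type iteration in which the iterated exponents are measured through $G$ rather than a fixed power, exploiting the doubling of $G$ guaranteed by $g_0<\infty$. This yields
\[
\sup_{B_{\sigma r}} G(|\X u^\eps|) \le \frac{c}{(1-\sigma)^Q}
\intav_{B_r} G(|\X u^\eps|)\dx,
\]
where $Q=2n+2$ is the homogeneous dimension of $\hh^n$ and $c=c(n,\delta,g_0,L)$. Passing to the limit $\eps\to 0$ via lower semicontinuity delivers \eqref{eq:locbound} for $u$. The hard step, in my estimation, is the vertical Caccioppoli bound together with the ensuing absorption; the rest is the translation into the Orlicz framework of fairly standard machinery.
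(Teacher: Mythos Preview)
Your overall architecture is right, but two of the steps are not as you describe and the first is a genuine gap.

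H\"ormander's theorem is about linear operators; the regularized equation $\dvh\A_\eps(\X u^\eps)=0$ is still quasilinear, with coefficients $D\A_\eps(\X u^\eps)$ depending on the unknown, so hypoellipticity does not apply. The paper gets the differentiability needed to write the equations for $X_l u$ and $Tu$ quite differently. It imposes \emph{two} auxiliary assumptions: that $\F=g(t)/t$ is bounded between positive constants (your regularization supplies this), \emph{and} that $\|\X u\|_{L^\infty}\le M$ a priori. Under both, the structure condition collapses to uniform sub-ellipticity and Capogna's quasilinear results yield $\X u, Tu\in HW^{1,2}_\loc$. The a priori Lipschitz bound is then manufactured in a separate approximation step: one solves Dirichlet problems with smooth boundary data on strictly convex sets and runs a Hilbert--Haar barrier argument (Theorem~\ref{prop:lip existence}), which is exactly where the symmetry of $D\A$ enters, since it makes affine functions exact solutions. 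Your proposal skips this entirely; without it you cannot justify differentiating the equation.

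Second, the vertical absorption you sketch is not Zhong's mechanism. Rewriting $(Tu)^2$ through the commutator $T=[X_k,X_{k+n}]$ and integrating by parts is the earlier approach (Domokos, Manfredi--Mingione, Mingione--Zatorska-Goldstein--Zhong) that closes only for a restricted range of exponents---roughly $2\le p<4$ in the $p$-Laplace case---and there is no structural reason to expect a contraction factor ``strictly less than one'' for arbitrary $\delta,g_0$. The device actually used here (Lemma~\ref{lem:Rev} and Corollary~\ref{cor:Rev''}, following Zhong) is to test the \emph{horizontal} equation for $X_l u$ with $\eta^2 G(\eta|Tu|)^{\gamma+1}X_l u$; this yields a reverse-type inequality bounding $G(\eta|Tu|)$-weighted second derivatives by $G(|\X u|)$-weighted ones, and the smallness needed for absorption comes from an artificial scaling parameter $\omega$ in $G\big(\eta|Tu|/\sqrt{\omega K_\eta}\big)$ together with an Orlicz Young inequality, not from any multiplier intrinsic to the equation. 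This is indeed the hard step you flag, but the idea you outline will not close it in the stated generality.
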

Although the above theorem is stated in terms of CC-metric balls, but as evident from its proof, \eqref{eq:locbound} holds for any homogeneous metrics of 
$\hh^n$ with appropriate choice of constant $c$. 
 
This paper is organised as follows. We provide 
some essential preliminaries on Heisenberg group, Orlicz-Sobolev spaces and sub-elliptic equations in 
Section \ref{sec:Preliminaries}. Then we prove several Caccioppoli type 
inequalities of the horizontal and vertical derivatives in Section \ref{sec:LocalB},   
followed by the proof of Theorem \ref{thm:mainthm} in the end.

Lastly, we remark that local $C^{1,\alpha}$-regularity of weak solutions of the 
$p$-Laplace equation in $\hh^n$, has been shown recently in \cite{Muk-Zhong}; 
the techniques can be adopted to show the same result for the equation \eqref{eq:eq} with structure conditions \eqref{eq:str cond} and \eqref{eq:g prop}, as well. Furthermore, $C^{1,\alpha}$-regularity can also be shown for general quasilinear equations of the form 
\begin{equation}\label{eq:AB}
\dvh A(x,u,\X u)+ B(x,u,\X u) =  0, 
\end{equation}
with appropriate growth and ellipticity conditions. Towards this pursuit, 
the estimate \eqref{eq:locbound} is necessary for both equations \eqref{eq:eq} and \eqref{eq:AB} and to obtain uniform $C^{1,\alpha}$ estimates, further technical difficulties appear that require lengthier and more delicate analysis.  Therefore, they are omitted from this paper and 
have been addressed in a follow up article \cite{Muk}.

\subsection*{Acknowledgement}
The author was supported by European Union’s Seventh Framework Program  "Metric Analysis For Emergent Technologies (MAnET)", a Marie Curie Actions-Initial Training Network, under Grant Agreement No. 607643. 
The author is thankful to Xiao Zhong for valuable suggestions and comments.

\section{Preliminaries}\label{sec:Preliminaries}
In this section, we fix the notations used and introduce the  
Heisenberg Group $\hh^n$. Also, we provide some essential facts on 
Orlicz-Sobolev spaces and sub-elliptic equations.

Throughout this paper, we shall denote a postive constant by $c$ which may vary from 
line to line. But $c$ would depend only on $n$, the constant $g_0$ of \eqref{eq:g prop} and and $L$ of \eqref{eq:str cond}, unless it is explicitly specified 
otherwise. The dependence on $\delta$ of \eqref{eq:g prop} shall appear at 
the very end. 
\subsection{Heisenberg Group}\label{subsec:Heisenberg Group}\noindent
\\
Here we provide the definition and properties of Heisenberg group  
that would be useful in this paper.  
For more details, we 
refer the reader to the books \cite{Bonfig-Lanco-Ugu, C-D-S-T}. 
\begin{Def}\label{def:H group}
 For $n\geq 1$, the \textit{Heisenberg Group} denoted by $\hh^n$, is identified to  the Euclidean space 
$\R^{2n+1}$ with the group operation 
\begin{equation}\label{eq:group op}
 x\cdot y\, := \Big(x_1+y_1,\ \dots,\ x_{2n}+y_{2n},\ t+s+\frac{1}{2}
\sum_{i=1}^n (x_iy_{n+i}-x_{n+i}y_i)\Big)
\end{equation}
for every $x=(x_1,\ldots,x_{2n},t),\, y=(y_1,\ldots,y_{2n},s)\in {\mathbb H}^n$.
\end{Def}
Thus, $\hh^n$ with the group operation \eqref{eq:group op} forms a 
non-Abelian Lie group, whose left invariant vector fields corresponding to the
canonical basis of the Lie algebra, are
\begin{equation}\label{eq:Xdef}
X_i=  \partial_{x_i}-\frac{x_{n+i}}{2}\partial_t, \quad
X_{n+i}=  \partial_{x_{n+i}}+\frac{x_i}{2}\partial_t,
\end{equation}
for every $1\leq i\leq n$ and the only
non zero commutator $ T= \partial_t$. 
We have 
\begin{equation}\label{eq:comm}
  [X_i\,,X_{n+i}]=  T\quad 
  \text{and}\quad [X_i\,,X_{j}] = 0\ \ \forall\ j\neq n+i.
\end{equation}
We call $X_1,\ldots, X_{2n}$ as \textit{horizontal
vector fields} and $T$ as the \textit{vertical vector field}. 
For a scalar function $ f: \hh^n \to \R$, we denote
$$  \X f  := (X_1f,\ldots, X_{2n}f)\quad \text{and}\quad 
\XX f :=  (X_jX_i f)_{i,j} $$
as the \textit{Horizontal gradient} and \textit{Horizontal Hessian}, 
respectively. 
From \eqref{eq:comm}, we have the following
trivial but nevertheless, an important inequality 
\begin{equation}\label{eq:T-D}
|Tf|\leq 2|\XX f| . 
\end{equation}
For a vector valued function 
$F = (f_1,\ldots,f_{2n}) : \hh^n\to \R^{2n}$, the 
\textit{Horizontal divergence} is defined as 
$ \dvh (F)  :=  \sum_{i=1}^{2n} X_i f_i$. 

The Euclidean gradient of a 
function $h: \R^{k} \to \R$, shall be denoted by
$\gr h=(D_1h,\ldots,D_{k} h)$ with $D_j = \del_{x_j}$ and the Hessian matrix by 
$D^2h=(D_iD_j h)_{i,j}$.

A piecewise smooth rectifiable curve $\gamma $ is called a \textit{horizontal curve} if its tangent vectors are contained in the \textit{horizontal sub-bundle} 
$\h = \spn\{X_1,\ldots,X_{2n}\}$, that is  
$\gamma'(t) \in \h_{\gamma(t)}$ for almost every $t$. 
For any 
$x, y \in \hh^n$, if the set of all horizontal curves is denoted as  
$$\Gamma(x,y) = \set{\gamma:[0,1]\to \hh^n}{\gamma(0) = x,\gamma(1) = y, 
\ \gamma'(t)\in\h_{\gamma(t)}},$$  
then Chow's accessibility theorem (see \cite{Chow}) gurantees $\Gamma(x,y) \neq \emp $. 
The \textit{Carnot-Carath\`eodory metric} (CC-metric) is defined in terms of the length $\ell(\gamma)$ of 
horizontal curves, as  
\begin{equation}\label{eq:cc metric}
d(x,y)= \ \inf\set{\ell(\gamma)}{\gamma \in \Gamma(x,y)}. 
\end{equation}
This is equivalent to the \textit{Kor\`anyi metric} 
$ \dhh(x,y)= \normh{y^{-1}\cdot x}$,  
where $\normh{\cdot}$ is the Kor\`anyi norm, see \cite{C-D-S-T}. We shall require  
the following norm, which is equivalent to the Kor\`anyi norm, 
\begin{equation}\label{eq:norm}
 \|x\| :=  \Big(\sum_{i=1}^{2n} x_i^2+ |t|\Big)^\frac{1}{2} 
 \quad\text{for all}\ \, x=(x_1,\ldots,x_{2n}, t)\in \hh^n. 
\end{equation}
Throughout this article we use CC-metric balls denoted by $B_r(x) = \set{y\in\hh^n}{d(x,y)<r}$ for $r>0$ and $ x \in \hh^n $. However, by virtue of the equivalence 
of the metrics, all assertions for CC-balls can be restated to Kor\`anyi balls 
or metric balls defined by the norm \eqref{eq:norm}. 
 
Throughout this paper, the Hausdorff dimension with respect to $d$ shall be denoted as 
\begin{equation}\label{eq:dimh}
Q := 2n+2,
\end{equation}
which is also the homogeneous 
dimension of the group $\mathbb H^n$. The Lebesgue measure of $\R^{2n+1}$ is a Haar measure of $\hh^n$; hence for any metric ball $B_r$, we have that 
$|B_r| = c(n)r^Q$. 

For $ 1\leq p <\infty$, the \textit{Horizontal Sobolev space} $HW^{1,p}(\Omega)$ consists
of functions $u\in L^p(\Omega)$ such that the distributional horizontal gradient $\X u$ is in $L^p(\Omega\,,\R^{2n})$.
$HW^{1,p}(\Omega)$ is a Banach space with respect to the norm
\begin{equation}\label{eq:sob norm}
  \| u\|_{HW^{1,p}(\Omega)}= \ \| u\|_{L^p(\Omega)}+\| \X u\|_{L^p(\Omega,\R^{2n})}.
\end{equation}
We define $HW^{1,p}_{\loc}(\Omega)$ as its local variant and 
$HW^{1,p}_0(\Omega)$ as the closure of $C^\infty_0(\Omega)$ in 
$HW^{1,p}(\Omega)$ with respect to the norm in \eqref{eq:sob norm}. 
The Sobolev Inequality has the following version in the sub-elliptic setting (see \cite{C-D-G, C-D-S-T}).
\begin{Thm}[Sobolev Inequality]\label{thm:sob emb}
Let $B_r\subset {\mathbb H}^n$ and $1<q<Q$. There exists a constant $c=c(n,q)>0$ such that for all $u \in HW^{1,q}_0(B_r)$, we have 
\begin{equation}\label{eq:sob emb}
\left(\intav_{B_r}| u|^{\frac{Q q}{Q-q}}\, dx\right)^{\frac{Q-q}{Q q}}
\leq\, cr \left(\intav_{B_r}| \X u|^q\, dx\right)^{\frac 1 q}.
\end{equation}
\end{Thm}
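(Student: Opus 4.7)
The plan is to establish the Sobolev inequality via the classical three-step argument: represent $u$ through the fundamental solution of the sub-Laplacian, dominate $u$ pointwise by a Riesz potential of $|\X u|$, and apply the Hardy--Littlewood--Sobolev theorem on the homogeneous space $(\hh^n,d,dx)$. By density it suffices to treat $u\in C^\infty_0(B_r)$ extended by zero to all of $\hh^n$, and to prove the non-averaged inequality $\|u\|_{L^{q^*}(\hh^n)}\leq c\|\X u\|_{L^q(\hh^n)}$ with $q^*=Qq/(Q-q)$; the factor $r$ in \eqref{eq:sob emb} will appear at the end by a scaling computation.

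First I would invoke Folland's explicit fundamental solution of the sub-Laplacian $\laph=\sum_i X_i^2$, namely $\Gamma(x)=c_n\normh{x}^{2-Q}$. Integration by parts against $u\in C^\infty_0(\hh^n)$, combined with the left-invariance of $X_1,\ldots,X_{2n}$ and the group convolution, produces the representation
\begin{equation*}
u(x)\,=\,-\int_{\hh^n}\Gamma(y^{-1}\cdot x)\,\laph u(y)\,dy\,=\,\int_{\hh^n}\X\Gamma(y^{-1}\cdot x)\cdot\X u(y)\,dy.
\end{equation*}
Because $\Gamma$ is homogeneous of degree $2-Q$ with respect to the Heisenberg dilations, $\X\Gamma$ is homogeneous of degree $1-Q$, so $|\X\Gamma(x)|\leq c\normh{x}^{1-Q}$. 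The equivalence of $\normh{\cdot}$ with the CC-distance $d(\cdot,0)$ then yields the pointwise Riesz-potential majorization
\begin{equation*}
|u(x)|\,\leq\, c\int_{\hh^n}\frac{|\X u(y)|}{d(x,y)^{Q-1}}\,dy\,=:\,c\,I_1(|\X u|)(x).
\end{equation*}

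Next, I would invoke the Hardy--Littlewood--Sobolev theorem for the Riesz potential $I_1$ on the space of homogeneous type $(\hh^n,d,dx)$ of homogeneous dimension $Q$ (recall $|B_r|=c(n)r^Q$). Splitting $I_1 f(x)$ at a level $\lambda$ into a piece controlled by the Hardy--Littlewood maximal function and a tail piece bounded by $\lambda^{1-Q/q'}\|f\|_{L^q}$, one gets the weak-type bound at the endpoint, and Marcinkiewicz interpolation gives
\begin{equation*}
\|I_1 f\|_{L^{q^*}(\hh^n)}\,\leq\, c(n,q)\,\|f\|_{L^q(\hh^n)}\quad\text{for}\ 1<q<Q.
\end{equation*}
Combining with the previous pointwise bound produces $\|u\|_{L^{q^*}(\hh^n)}\leq c\|\X u\|_{L^q(\hh^n)}$. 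Since $u\equiv 0$ outside $B_r$ and $|B_r|=c(n)r^Q$, the elementary identity $1/q-1/q^*=1/Q$ converts $|B_r|^{1/q-1/q^*}=|B_r|^{1/Q}$ into the prefactor $cr$ when one normalises both integrals to averages, producing \eqref{eq:sob emb}.

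The main obstacle is Step 1/2: one needs an honest Riesz-potential majorization on the non-Euclidean metric structure, and on $\hh^n$ this rests on Folland's explicit fundamental solution together with its dilation-homogeneity, a nontrivial group-theoretic input. In more general Carnot groups or CC-spaces one would replace this step by a representation formula deduced from a ball Poincar\'e inequality in the spirit of Franchi--Serapioni--Serra Cassano, or by the abstract truncation method of Haj\l{}asz--Koskela; on $\hh^n$, however, the Folland route is the cleanest, and once the pointwise bound is available the rest is standard real-variable theory on spaces of homogeneous type, as executed in \cite{C-D-G,C-D-S-T}.
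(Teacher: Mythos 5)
The paper does not prove this theorem---it is quoted from \cite{C-D-G, C-D-S-T}---and your argument is exactly the one carried out in those references: representation of $u$ via Folland's fundamental solution $\Gamma=c_n\normh{\cdot}^{2-Q}$ of $\laph$, pointwise domination $|u|\leq c\,I_1(|\X u|)$ by the Riesz potential, fractional integration (weak-type bound plus Marcinkiewicz, or Hedberg's trick) on the homogeneous space $(\hh^n,d,dx)$, and the scaling identity $1/q-1/q^{*}=1/Q$ to produce the prefactor $cr$ after normalising to averages. The only cosmetic caveat is that integrating by parts in the $y$-variable against $\Gamma(y^{-1}\cdot x)$ yields the \emph{right}-invariant derivatives of $\Gamma$ rather than $\X\Gamma$ itself; since these are still homogeneous of degree $1-Q$, the majorization by $I_1(|\X u|)$ and everything downstream is unaffected.
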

We remark that the Lipschitz continuity that is considered, is implied in the sense 
of Folland-Stein \cite{Folland-Stein--book}, i.e. the Lipschitz continuity with respect to the CC-metric. It does 
not make any assertion on the regularity of the vertical derivative.
\subsection{Orlicz-Sobolev Spaces}\label{subsec:Orlicz-Sobolev Spaces}\noindent
\\
In this subsection, 
we recall some facts on Orlicz-Sobolev functions, which shall be necessary later. Further details can be found in textbooks e.g. \cite{Kuf-O-F, Rao-Ren}. 
\begin{Def}[Young function]\label{def:young}
 If $ \psi :[0,\infty) \to [0,\infty) $ is an non-decreasing, left continuous function with 
$\psi(0) = 0 $ and $ \psi(s)>0 $ for all $ s >0 $, then any function 
$\Psi : [0,\infty) \to [0,\infty] $ of the form 
\begin{equation}\label{eq:young}
  \Psi(t) = \int^t_0 \psi(s)\, ds  
\end{equation}
is called a \textit{Young function}. 
A continuous Young function $\Psi : [0,\infty) \to [0,\infty) $ 
satisfying  
$\Psi(t) = 0$ iff $t = 0,\ \lim_{t\to \infty}\Psi(t)/t  =  \infty$ and $ 
\lim_{t \to 0}\Psi(t)/t=  0 $, is called 
\textit{N-function}. 
\end{Def}
There are several different definitions available in various references.  
However, within a slightly restricted range of functions (as in our case), 
all of them are equivalent. We refer to the book of Rao-Ren \cite{Rao-Ren}, 
for a more general discussion.
\begin{Def}[Conjugate]\label{def:conj}
The \textit{generalised inverse} of a montone function $\psi$ is defined as 
$ \psi^{-1}(t)  :=  \inf\{s \geq 0\ |\ \psi(s) >t \} $.  
Given any Young function $\Psi(t)  =  \int^t_0 \psi(s) ds $, 
its 
\textit{conjugate} function $ \Psi^* : [0,\infty) \to [0,\infty]$ is defined as
\begin{equation}\label{eq:young comp}
 \Psi^* (s)  :=  \int^s_0 \psi^{-1}(t)\, dt 
\end{equation}
and $(\Psi,\Psi^*) $ is called a \textit{complementary pair}, which 
is \textit{normalised} if 
$ \Psi(1) +\Psi^*(1) = 1$. 
\end{Def}
A Young function $\Psi$ is convex, increasing, left continuous and  
satisfies $ \Psi(0) = 0 $ and $ \lim_{t\to\infty}\Psi(t) = \infty $. 
The generalised inverse of $\Psi$ is right continuous, increasing and coincides with the usual inverse 
when $\Psi$ is continuous and strictly increasing.
In general, the  
inequality 
\begin{equation}\label{eq:inv ineq}
\Psi(\Psi^{-1}(t))\leq t \leq \Psi^{-1}(\Psi(t))
\end{equation}
is satisfied for all $ t \geq 0 $ and equality holds when 
$\Psi(t)$ and $\Psi^{-1}(t) \in (0,\infty)$. 
 It is also evident that that the conjugate function $\Psi^*$ is also a Young 
 function, $\Psi^{**} = \Psi$
and for any constant $c>0$, we have $ (c\,\Psi)^*(t) = c\,\Psi^*(t/c)$. Here are two standard examples of 
complementary pair of Young functions.  
\begin{enumerate}
 \item $ \Psi(t) = t^p/p$ and $ \Psi^*(t) = t^{p^*}/p^*$ when 
 $ 1< p, p^*< \infty$ and $ 1/p +1/p^* = 1 $.
 \item $\Psi(t) = (1+t)\log(1+t)-t$ and $ \Psi^*(t) = e^t-t-1 $.
 \end{enumerate}
\begin{Lem}\label{lem:comp prop}
 If $(\Psi,\Psi^*) $ is a complementary pair of N-functions, then for any $t >0$ we have 
 \begin{equation}\label{eq:comp prop} 
  \Psi^*\left(\frac{\Psi(t)}{t}\right)\leq \Psi(t).
 \end{equation}
\end{Lem}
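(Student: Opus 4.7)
The plan is to combine the integral definition \eqref{eq:young comp} of the conjugate $\Psi^*$ with the monotonicity of the generalised inverse $\psi^{-1}$ and the basic chord-slope inequality for the convex function $\Psi$. The whole argument is essentially a one-variable calculus computation, with no real analytic obstacles.

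First, since $\psi$ is non-decreasing with $\psi(0)=0$ and $\Psi(t)=\int_0^t \psi(s)\,ds$, one has the pointwise chord-versus-endpoint bound
$$\Psi(t) \;=\; \int_0^t \psi(s)\,ds \;\leq\; t\,\psi(t),\qquad t>0,$$
which is equivalent to $\Psi(t)/t \leq \psi(t)$. Applying the non-decreasing generalised inverse $\psi^{-1}$ to both sides and invoking the left half of \eqref{eq:inv ineq} for $\psi$ itself yields
$$\psi^{-1}\!\left(\frac{\Psi(t)}{t}\right) \;\leq\; \psi^{-1}(\psi(t)) \;\leq\; t.$$

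Now plug this into the definition \eqref{eq:young comp} of $\Psi^*$. Since $\psi^{-1}$ is non-decreasing, for every $u\in [0,\Psi(t)/t]$ we have $\psi^{-1}(u) \leq \psi^{-1}(\Psi(t)/t) \leq t$, and hence
$$\Psi^*\!\left(\frac{\Psi(t)}{t}\right) \;=\; \int_0^{\Psi(t)/t}\!\!\psi^{-1}(u)\,du \;\leq\; t \cdot \frac{\Psi(t)}{t} \;=\; \Psi(t),$$
which is the desired inequality. The only point requiring minor care is that the generalised inverse need not satisfy $\psi^{-1}\!\circ\psi = \mathrm{id}$ exactly (since $\psi$ may have flat pieces or jumps), so one genuinely must use the one-sided form of \eqref{eq:inv ineq} rather than equality; apart from this, nothing in the argument is subtle. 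As a sanity check, the same conclusion drops out of the Legendre-transform representation $\Psi^*(s) = \sup_{r\geq 0}\bigl(sr - \Psi(r)\bigr)$: for $r\leq t$ one bounds $\Psi(t)r/t \leq \Psi(t)$ directly, while for $r>t$ convexity of $\Psi$ gives $\Psi(t)/t \leq \Psi(r)/r$, so $\Psi(t)r/t - \Psi(r)\leq 0$.
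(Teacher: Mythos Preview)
Your main argument has a slip at the crucial step. You write
\[
\psi^{-1}\!\left(\frac{\Psi(t)}{t}\right) \;\leq\; \psi^{-1}(\psi(t)) \;\leq\; t,
\]
citing ``the left half of \eqref{eq:inv ineq}''. But the left half of \eqref{eq:inv ineq} reads $\psi(\psi^{-1}(t))\leq t$, not $\psi^{-1}(\psi(t))\leq t$; the \emph{right} half in fact gives the opposite inequality $\psi^{-1}(\psi(t))\geq t$. Concretely, if $\psi$ is constant on $[t,t']$ then $\psi^{-1}(\psi(t))=\inf\{s:\psi(s)>\psi(t)\}\geq t'>t$. So the displayed chain fails as written.

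The conclusion $\psi^{-1}(\Psi(t)/t)\leq t$ is nevertheless correct for N-functions, and you can reach it without passing through $\psi^{-1}(\psi(t))$. Since $\Psi(t)/t\to 0$ as $t\to 0$ forces $\psi(0^+)=0$ while $\psi(t)>0$, the average $\Psi(t)/t=\frac1t\int_0^t\psi(s)\,ds$ is \emph{strictly} below $\psi(t)$; hence $t$ itself lies in $\{s:\psi(s)>\Psi(t)/t\}$ and the infimum defining $\psi^{-1}(\Psi(t)/t)$ is at most $t$. With this fix your integral bound $\int_0^{\Psi(t)/t}\psi^{-1}(u)\,du\leq t\cdot \Psi(t)/t=\Psi(t)$ is valid. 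Incidentally, your Legendre-transform ``sanity check'' at the end is already a complete and correct proof on its own and could simply replace the main argument.

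For comparison, the paper arrives at the same endpoint via two applications of the mean value theorem: first $\Psi(t)/t=\psi(s_0)$ for some $s_0\in(0,t]$, then $\Psi^*(\Psi(t)/t)=(\Psi(t)/t)\,\psi^{-1}(r_0)$ for some $r_0<\psi(s_0)$, whence $\psi^{-1}(r_0)\leq s_0\leq t$. Your (corrected) route is marginally more elementary in that it replaces the mean value theorem by direct monotonicity; the Legendre-transform variant is the most robust, since it uses only convexity of $\Psi$ and sidesteps any regularity assumptions on $\psi$.
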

\begin{proof}
 Let $\Psi(t) =  \int^t_0 \psi(s) ds $. From mean value theorem, there exists $ s_0\in (0,t]$ such that 
 $$  \psi(s_0)= \frac{1}{t}\int^t_0 \psi(s)\,ds = \frac{\Psi(t)}{t}  $$
 for every $t>0$. 
 Using definition \eqref{eq:young comp} and mean value theorem again, we find 
 that 
 there exist $r_0 \in(0,\psi(s_0))$, such that we have 
 \begin{equation*}
  \Psi^*\left(\frac{\Psi(t)}{t}\right)= \ \int^{\Psi(t)/t}_0 \inv{\psi}(r)\,dr
  = \ \frac{\Psi(t)}{t}\,\inv{\psi}(r_0) .
 \end{equation*}
Since $\psi$ and $\inv{\psi}$ are non-decreasing functions, hence 
$\inv{\psi}(r_0) \leq \inv{\psi}(\psi(s_0)) = s_0 \leq t$. Using this on the above,  one easily gets \eqref{eq:comp prop}, to complete the proof. 
\end{proof}

The following Young's inequality is well known. We refer to \cite{Rao-Ren} for a proof. 
\begin{Thm}[Young's Inequality]\label{thm:YI}
 Given a Young function $\Psi(t) =  \int^t_0 \psi(s) ds $, we have the following for all $s,t >0$; 
 \begin{equation}\label{eq:YI}
 st \,\leq\,   \Psi(s)  + \Psi^*(t)
\end{equation}
and equality holds iff $ t = \psi(s) $ or $ s = \inv{\psi}(t)$.
\end{Thm}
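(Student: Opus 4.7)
The plan is to give the classical geometric proof of Young's inequality, interpreted rigorously via Fubini's theorem. The idea is that the rectangle $[0,s]\times[0,t]\subset[0,\infty)^2$, of area $st$, is covered (up to a planar Lebesgue null set) by two regions whose areas are exactly $\Psi(s)$ and $\Psi^*(t)$, namely the region under the graph of $\psi$ on $[0,s]$ and the region to the left of that graph on $[0,t]$.

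Concretely, I would set
$$R_1 = \{(u,v)\in[0,\infty)^2 : 0\leq u\leq s,\ 0\leq v\leq \psi(u)\},\qquad R_2 = \{(u,v)\in[0,\infty)^2 : 0\leq v\leq t,\ 0\leq u\leq \inv{\psi}(v)\},$$
so that Fubini immediately gives $|R_1|=\Psi(s)$ and $|R_2|=\Psi^*(t)$. The first step is the set inclusion $[0,s]\times[0,t]\subseteq R_1\cup R_2$: for a point $(u,v)$ of the rectangle, if $v\leq\psi(u)$ then $(u,v)\in R_1$, while if $v>\psi(u)$, then the definition $\inv{\psi}(v)=\inf\{r\geq 0: \psi(r)>v\}$ together with monotonicity of $\psi$ forces $u\leq \inv{\psi}(v)$, so $(u,v)\in R_2$. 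The second step is to note that $R_1\cap R_2$ sits inside the graph $\{(u,\psi(u)):u\geq 0\}$ of the monotone function $\psi$, which has two-dimensional Lebesgue measure zero. Combining these gives
$$\Psi(s)+\Psi^*(t)\,=\,|R_1|+|R_2|\,\geq\,|R_1\cup R_2|\,\geq\,|[0,s]\times[0,t]|\,=\,st,$$
which is \eqref{eq:YI}.

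For the equality case, $\Psi(s)+\Psi^*(t)=st$ forces $R_1\cup R_2=[0,s]\times[0,t]$ up to a null set, i.e.\ $R_1$ cannot extend above the line $v=t$ and $R_2$ cannot extend to the right of $u=s$ in a set of positive measure. This means $\psi(u)\leq t$ for a.e.\ $u\in[0,s]$ and $\inv{\psi}(v)\leq s$ for a.e.\ $v\in[0,t]$; by left-continuity of $\psi$ and the definition of $\inv{\psi}$, these two conditions are each equivalent to the single statement that $(s,t)$ lies on the graph of $\psi$, i.e.\ $t=\psi(s)$, or equivalently $s=\inv{\psi}(t)$.

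The one subtlety, and the only part requiring care, is the handling of the generalised inverse when $\psi$ has jumps or flat intervals: the set inclusion $[0,s]\times[0,t]\subseteq R_1\cup R_2$ and the null-graph claim must be verified without assuming $\psi$ is continuous or strictly increasing. Once those set-theoretic facts are established, the entire proof reduces to Fubini plus elementary monotonicity, with no further analysis required.
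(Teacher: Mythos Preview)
The paper does not actually prove this theorem: it states that Young's inequality is well known and refers to \cite{Rao-Ren} for a proof. So there is no argument in the paper to compare against, and your geometric/Fubini proof is exactly the standard one found in that reference.

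Your proof of the inequality \eqref{eq:YI} is correct. The set inclusion $[0,s]\times[0,t]\subseteq R_1\cup R_2$ is verified properly, and your claim that $R_1\cap R_2$ lies on the graph of $\psi$ is also true (if $v\leq\psi(u)$ and $u\leq\inv{\psi}(v)$, then $\psi(r)\leq v$ for all $r<u$, hence $\psi(u)\leq v$ by left-continuity, forcing $v=\psi(u)$). In fact that null-intersection claim is not even needed for the inequality, since $|R_1|+|R_2|\geq|R_1\cup R_2|$ holds unconditionally.

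There is, however, a slip in your equality discussion. You conclude that equality means ``$t=\psi(s)$, or equivalently $s=\inv{\psi}(t)$'', but these two conditions are \emph{not} equivalent when $\psi$ has jumps or flat pieces; this is precisely why the theorem states them as a disjunction. For instance, take $\psi(r)=0$ for $r<1$ and $\psi(r)=2$ for $r\geq 1$: at $s=1,\ t=1$ one has $\Psi(1)+\Psi^*(1)=0+1=1=st$, and $s=\inv{\psi}(t)=1$, yet $t=1\neq 2=\psi(s)$. Your null-set analysis really yields the pair of one-sided constraints $\psi(s)\leq t$ and $\inv{\psi}(v)\leq s$ for $v<t$, and unwinding these with the left-/right-continuity of $\psi$ and $\inv{\psi}$ gives the disjunction $t=\psi(s)$ \emph{or} $s=\inv{\psi}(t)$, not a single equivalent condition.
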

\begin{Def}[Doubling function]\label{def:doubling}
The Young function $\Psi$ is called \textit{doubling} if there exists 
a constant $C_2>0 $ such that for all $t \geq 0$, we have 
$$\Psi(2t)\leq C_2\, \Psi(t). $$
\end{Def}
In the growth and ellipticity condition \eqref{eq:str cond}, the 
structure function $g$ satisfying \eqref{eq:g prop}, is a doubling function. 
Its doubling constant $C_2 = 2^{g_0}$ (see Lemma \ref{lem:gandG prop} below). 
Henceforth, we restrict to Orlicz spaces of doubling functions, thereby 
avoiding unnecessary technicalities. 
\begin{Def}\label{def:Orl space}
Let $ \Omega\subset\R^m $ be open and $\mu$ be a $\sigma$-finite measure on $\Om$. For a doubling Young function $\Psi$, the 
\textit{Orlicz space} $L^\Psi(\Omega,\mu)$ is defined as the vector space generated by the set
$ \{u: \Omega \to \R\ |\ u\ \text{measurable},\ \int_\Omega 
\Psi(|u|)\,d\mu < \infty\} $. The space is equipped with the following 
\textit{Luxemburg norm} 
\begin{equation}\label{eq:lux norm}
 \|u\|_{L^\Psi(\Omega,\mu)}  :=  \inf\Big\{ k >0 : \int_\Omega 
\Psi\left(\frac{|u|}{k}\right) \,d\mu \leq 1\Big\} 
\end{equation}
If $\mu $ is the Lebesgue measure, the space 
is denoted by $L^\Psi(\Omega)$ and any $u \in L^\Psi(\Omega)$ is called 
a $\Psi$-integrable function.
\end{Def}
The function 
$u \mapsto \|u\|_{L^\Psi(\Omega,\mu)}$ is lower semi continuous and $L^\Psi(\Omega,\mu)$ is a
Banach space with the norm in \eqref{eq:lux norm}. 
%
The following theorem is a generalised version of H\"older's inequality, which 
follows easily from   
the Young's inequality \eqref{eq:YI}, see \cite{Rao-Ren} or \cite{Tuo}. 
\begin{Thm}[H\"older's Inequality]\label{thm:gen holder}
 For every $ u \in L^\Psi(\Omega,\mu)$ and $ v \in L^{\Psi^*}(\Omega,\mu)$, we have 
 \begin{equation}\label{eq:gen holder}
 \int_\Om |uv|\,d\mu\leq 2\, \|u\|_{L^\Psi(\Omega,\mu)}\|v\|_{L^{\Psi^*}(\Omega,\mu)}
\end{equation}
\end{Thm}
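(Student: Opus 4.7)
The plan is to prove the Orlicz Hölder inequality by the standard route: normalize by the Luxemburg norms, apply Young's inequality pointwise, and integrate. Throughout, I assume $a := \|u\|_{L^\Psi(\Om,\mu)}$ and $b := \|v\|_{L^{\Psi^*}(\Om,\mu)}$ are both finite. If either vanishes then $u = 0$ or $v = 0$ a.e. and the inequality is trivial with both sides $0$; if either is $+\infty$ the right-hand side is $+\infty$ and there is nothing to prove.

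First I would establish the "unit ball" property: for any nonzero $u \in L^\Psi(\Om,\mu)$ we have
\begin{equation*}
\int_\Om \Psi\!\left(\frac{|u|}{a}\right)d\mu \,\leq\, 1,
\end{equation*}
and analogously for $v$ with $\Psi^*$. By definition of the infimum in \eqref{eq:lux norm}, pick a decreasing sequence $k_j \downarrow a$ with $\int_\Om \Psi(|u|/k_j)\,d\mu \leq 1$. Since $\Psi$ is non-decreasing and continuous on $[0,\infty)$, the integrands $\Psi(|u|/k_j)$ increase pointwise to $\Psi(|u|/a)$. The monotone convergence theorem then yields the desired bound. (This is the single place where one must take a little care; the doubling hypothesis of Definition \ref{def:doubling} is what guarantees $|u|/a \in L^\Psi$ so that the limit is finite, even though we only pass to a limit here by monotonicity.)

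Next I would apply Young's inequality from Theorem \ref{thm:YI} pointwise with $s = |u(x)|/a$ and $t = |v(x)|/b$, giving
\begin{equation*}
\frac{|u(x)\,v(x)|}{a\,b} \,\leq\, \Psi\!\left(\frac{|u(x)|}{a}\right) + \Psi^*\!\left(\frac{|v(x)|}{b}\right)
\end{equation*}
for $\mu$-almost every $x \in \Om$. Both terms on the right are nonnegative measurable functions, so integrating over $\Om$ against $\mu$ and invoking the unit-ball property for $u$ with $\Psi$ and for $v$ with $\Psi^*$ gives
\begin{equation*}
\frac{1}{a\,b}\int_\Om |u\,v|\,d\mu \,\leq\, \int_\Om \Psi\!\left(\frac{|u|}{a}\right)d\mu + \int_\Om \Psi^*\!\left(\frac{|v|}{b}\right)d\mu \,\leq\, 2.
\end{equation*}
Multiplying through by $ab = \|u\|_{L^\Psi(\Om,\mu)}\|v\|_{L^{\Psi^*}(\Om,\mu)}$ produces exactly \eqref{eq:gen holder}.

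There is no serious obstacle here; the only subtlety is the justification of the unit-ball inequality at the infimum, which I would handle by the monotone convergence argument above rather than by invoking lower semicontinuity of the Luxemburg norm (which is also available but heavier machinery). The factor $2$ on the right of \eqref{eq:gen holder} comes directly from the two summands in Young's inequality and cannot be improved without passing to a normalized complementary pair as in Definition \ref{def:conj}; no such normalization is required for the statement at hand.
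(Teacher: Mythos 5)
Your proof is correct and is exactly the standard argument the paper has in mind; the paper itself omits the proof of Theorem \ref{thm:gen holder}, noting only that it follows easily from Young's inequality \eqref{eq:YI} and citing Rao--Ren. One small remark: the monotone convergence step already yields $\int_\Om \Psi(|u|/a)\,d\mu \le 1$ directly from the uniform bound by $1$ along the sequence $k_j\downarrow a$, so your parenthetical appeal to the doubling property there is unnecessary.
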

\begin{Rem}
The factor $2$ on the right hand side of the above, can be dropped if $(\Psi, \Psi^*)$ is normalised and one is replaced by 
$\Psi(1)$ in the definition \eqref{eq:lux norm} of Luxemburg norm. 
\end{Rem}
The \textit{Orlicz-Sobolev 
space} $W^{1,\Psi}(\Om)$ 
can be defined similarly by $ L^\Psi$ norms of the function and 
its gradient, see \cite{Rao-Ren}, that resembles $W^{1,p}(\Om)$ for the 
special case of $\Psi(t) =t^p$. 
But here for $\Om\subset \hh^n$, we require the notion of  
\textit{Horizontal Orlicz-Sobolev spaces}, analoguous to the horizontal Sobolev spaces defined in the previous subsection.
\begin{Def}\label{def:HOS space}
 We define the space 
 $HW^{1,\Psi}(\Omega) = \{u\in L^\Psi(\Omega)\ |\ \Xu\in L^\Psi(\Omega,\R^{2n})\}$ for an open set $\Om\subset \hh^n$ and a  
 doubling Young function $\Psi$, along with the norm 
 $$ \|u\|_{HW^{1,\Psi}(\Omega)} :=  
\|u\|_{L^\Psi(\Omega)}+ \|\Xu\|_{L^\Psi(\Omega, \R^{2n})} ;$$
the spaces $HW^{1,\Psi}_\loc(\Om),\ HW^{1,\Psi}_0(\Om)$ are  defined, 
similarly as earlier.
\end{Def}
We remark that, all these notions can be defined for a general metric space, equipped with a doubling measure and upper gradient. More details of these can be found in \cite{Tuo}.

\subsection{Sub-elliptic equations}\noindent
\\
Here, we discuss the known results on existence and uniqueness of weak 
solutions of the equation \eqref{eq:eq}. Using the notation of horizontal divergence, 
we rewrite \eqref{eq:eq} as
\begin{equation}\label{eq:maineq}
-\dvh (\A(\X u)) =  0 \ \ \text{in}\ \Om,
\end{equation}
where $\A:\R^{2n}\to \R^{2n}$ satisfies \eqref{eq:str cond} and the matrix  
$D\A(z)$ is symmetric. Now, we enlist monotonicity and doubling properties of the structure 
function $g$, in the following lemma.
\begin{Lem}\label{lem:gandG prop} Let $ g\in C^1([0,\infty)) $ be a function that satisfies \eqref{eq:g prop} for some constant $g_0>0$ and 
$g(0) = 0$. If $ G(t) = \int^t_0 g(s)ds $, then the following holds. 
\begin{align}
\label{eq:gG1}&(1) \  \ G \in C^2([0,\infty))\ \text{is convex}\,;\\
\label{eq:gG2}&(2)\ \ tg(t)/(1+g_0)\leq G(t)\leq tg(t)\ \ \forall\ \ t\geq 0;\\
\label{eq:gG3}&(3)\  \ g(s)\leq g(t)\leq (t/s)^{g_0}g(s) \ \ \forall\ \ 0\leq s<t;\\
\label{eq:gG4}&(4) \ \ G(t)/t \ \text{is an increasing function}\ \ \forall\ \ t>0;\\
\label{eq:gG5}&(5) \ \ tg(s)\leq tg(t) + sg(s)\ \ \forall\ \ t,s \geq 0.
\end{align}
\end{Lem}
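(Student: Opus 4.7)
The plan is to derive every item from the single differential inequality $\delta \leq tg'(t)/g(t) \leq g_0$ by direct integration of $(\log g)'$, plus a couple of one-line monotonicity arguments; item (2) is the only one requiring any (mild) cleverness.

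I would begin with item (3), because everything else leans on it. Rewriting the upper bound as $(\log g)'(t) \leq g_0/t$ and integrating over $[s,t]$ with $0 < s < t$ yields $\log(g(t)/g(s)) \leq g_0\log(t/s)$, hence $g(t) \leq (t/s)^{g_0}g(s)$. The lower half of (3), namely $g(s) \leq g(t)$, is the same observation in disguise: since $g'(t) \geq \delta g(t)/t > 0$ for $t > 0$, $g$ is strictly increasing on $(0,\infty)$, and the value $g(0) = 0$ handles the endpoint. As a byproduct this already gives the doubling constant $C_2 = 2^{g_0}$ mentioned in the remark just before the lemma.

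Items (1), (4), (5) are then essentially bookkeeping. For (1), since $G' = g$ is continuous and $G''(t) = g'(t) \geq 0$ on $(0,\infty)$, $G$ is convex with $G(0) = G'(0) = 0$, and the claimed regularity follows from $g \in C^1((0,\infty))$. For (4), direct computation gives $\frac{d}{dt}\bigl(G(t)/t\bigr) = (tg(t) - G(t))/t^2$, which is non-negative once the upper half of (2) is in hand. For (5), I would split on whether $s \leq t$ or $s > t$: in the first case monotonicity of $g$ from (3) gives $tg(s) \leq tg(t)$, while in the second case $tg(s) \leq sg(s)$, and in either case the sum $tg(t) + sg(s)$ dominates trivially.

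The only item with any substance is (2). The upper bound $G(t) \leq tg(t)$ is immediate from monotonicity of $g$, since $G(t) = \int_0^t g(s)\,ds \leq tg(t)$. For the lower bound $tg(t) \leq (1 + g_0)G(t)$ my plan is to introduce the auxiliary function
\[
H(t) := (1+g_0)G(t) - tg(t),
\]
observe that $H(0) = 0$, and differentiate:
\[
H'(t) = (1+g_0)g(t) - g(t) - tg'(t) = g_0 g(t) - tg'(t) \geq 0,
\]
which is exactly the upper ellipticity hypothesis on $tg'(t)/g(t)$. Hence $H \geq 0$ on $[0,\infty)$, which is the desired inequality. I do not expect any genuine obstacle — the only minor subtlety is ordering the items so that (2) precedes (4), and so that (3) (hence monotonicity of $g$) is available before (5); otherwise the lemma is a straightforward compilation of consequences of \eqref{eq:g prop}, and the real work of the paper happens later.
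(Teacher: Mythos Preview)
Your proof is correct in every item. The paper itself omits the proof entirely, calling it trivial and referring to Lemma~1.1 of Lieberman \cite{Lieb--gen}, so there is no argument in the paper to compare against; your direct derivations from the differential inequality \eqref{eq:g prop} are exactly the standard ones.
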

The proof of the above lemma is trivial  
(see Lemma 1.1 of \cite{Lieb--gen}), so we omit it.
Notice that \eqref{eq:gG3} implies that $g$ is increasing and 
doubling, with $g(2t) \leq 2^{g_0} g(t)$. In fact, it is easy to see that, 
\eqref{eq:g prop} implies $t\mapsto g(t)/t^{g_0}$ is decreasing and 
$t\mapsto g(t)/t^\delta$ is increasing. Thus, 
\begin{equation}\label{eq:gdoub}
 \min\{\alpha^\delta,\alpha^{g_0}\}g(t)\leq g(\alpha t)\leq 
 \max\{\alpha^\delta,\alpha^{g_0}\}g(t)\quad\text{for all}\ \alpha, t\geq 0. 
\end{equation}

Here onwards, we fix the following notations, 
\begin{equation}\label{eq:GandF}
\F(t)  :=  g(t)/t \quad \text{and}\quad G(t)  :=  \int^t_0 g(s)\,ds. 
\end{equation} 
Thus, $\F$ and $G$ are also doubling functions and $G$ is a Young function.
Now we restate the structure condition \eqref{eq:str cond}. For every 
$z,\xi\in\R^{2n}$, we have that
\begin{equation}\label{eq:str cond diff}
 \begin{aligned}
 \F(|z|)|\xi|^2 \leq \,&\inp{D\A(z)\,\xi}{\xi}\leq L\,\F(|z|)|\xi|^2;\\
 &|\A(z)|\leq L\,|z|\F(|z|). 
\end{aligned} 
\end{equation}
\begin{Def}\label{def:weak soln}
 Any $ u\in HW^{1,G}(\Om)$ is called a weak solution of the equation \eqref{eq:maineq} if 
 for every $ \varphi \in C^\infty_0(\Om)$, we have that 
 \begin{equation}\label{eq:weak soln}
  \int_\Om \inp{\A(\X u)}{\X \varphi}\dx= 0. 
 \end{equation}
 In addition, for all non-negative $ \varphi \in C^\infty_0(\Om)$, if the integral above is positive (resp. negative) then $u$ is called a weak supersolution (resp. subsolution) of the equation \eqref{eq:maineq}. 
 \end{Def}
Monotonicity of the operator $\A$ is required for existence of weak solutions. 
This follows from the structure condition  
\eqref{eq:str cond diff}. First, notice that, 
from \eqref{eq:str cond diff}
\begin{align*}
\inp{\A(z)-\A(w)}{z-w}&= \int^1_0\inp{D\A\big(w+t(z-w)\big)(z-w)}{(z-w)} \,dt\\
&\geq |z-w|^2\int^1_0 \F(|w+t(z-w)|)\,dt,
\end{align*}
for any $ z, w \in \R^{2n}$. Now, it is possible to show that
\begin{align*}
 |z|/2\leq \, &\abs{tz+(1-t)w}\leq 3|z|/2
 \ \ \quad\quad\quad\ \text{if}\ |z-w|\leq 2|z|,\ t\geq 3/4 ,\\ 
 |z-w|/4\leq \, &\abs{tz+(1-t)w}\leq 3|z-w|/2
 \ \ \ \quad\text{if}\ |z-w|> 2|z|,\ t\leq 1/4,
\end{align*}
with appropriate use of triangle inequality. Combining the above inequalities and using the doubling property, we 
have the following monotonicity inequality 
\begin{equation}\label{eq:monotone}
 \begin{aligned}
  \inp{\A(z)-\A(w)}{z-w}\geq c(g_0)\ 
\begin{cases}
  |z-w|^2 \,\F(|z|)\ &\text{if}\ |z-w|\leq 2|z| \\
 |z-w|^2 \,\F(|z-w|) \ \ &\text{if}\ |z-w|> 2|z|
\end{cases}
 \end{aligned}
\end{equation} 
and therefore the following ellipticity condition
\begin{equation}
\label{eq:elliptic} \inp{\A(z)}{z}\geq c(g_0)\,|z|^2\F(|z|) \geq c(g_0) G(|z|).
\end{equation} 
\begin{Rem}\label{rem:comparison}
The inequality in \eqref{eq:monotone} is reminiscent of the monotonicity inequality for the $p$-laplacian operator. Precisely, when $\A(z) = |z|^{p-2}z $ for $ 1<p<\infty$, we have
 \begin{align}
\label{eq:special case} \ \  \left(|z|^{p-2}z  - |w|^{p-2}w\right)\cdot(z-w) \geq c(p) 
\begin{cases}
 |z-w|^2(|z|+|w|)^{p-2}\ \ &\text{if}\ \ 1<p<2\\
 |z-w|^p \ \ &\text{if}\ \ p\geq 2
\end{cases}
\end{align}
and from this, one can also derive \eqref{eq:monotone} for this special case. 
\end{Rem} 
\begin{Thm}[Existence]\label{thm:existence}
If $ u_0 \in HW^{1,G}(\Om)$ is a given function and the operator $\A$ has the 
structure condition \eqref{eq:str cond diff}, then 
there exists a unique weak solution $u\in HW^{1,G}(\Om)$ for 
the Dirichlet problem
\begin{equation}\label{eq:dirichlet prob}
 \begin{cases}
  -\dvh (\A(\X u))= \ 0\ \ \text{in}\ \Om;\\
 \ \ \ u - u_0\in HW^{1,G}_0(\Om).
 \end{cases}
\end{equation}
\end{Thm}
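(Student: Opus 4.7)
My plan is to obtain the solution by the direct method applied to the variational functional naturally associated with $\A$, then extract the equation via the first variation and uniqueness from the strict monotonicity in \eqref{eq:monotone}. The symmetry of $D\A$ on the simply-connected space $\R^{2n}$ lets me apply the Poincar\'e lemma to produce a potential $F \in C^2(\R^{2n})$ with $F(0)=0$ and $\nabla F = \A$, concretely $F(z) = \int_0^1 \inp{\A(tz)}{z}\, dt$. Convexity of $F$ is then inherited from the positive-definiteness of $D^2F = D\A$, and combining the lower bound from \eqref{eq:elliptic}, the upper bound $|\A(z)| \le L g(|z|)$, \eqref{eq:gG2}, and the doubling estimate \eqref{eq:gdoub} yields the two-sided comparability $c_1 G(|z|) \le F(z) \le c_2 G(|z|)$. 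I accordingly set $I(u) := \int_\Om F(\X u)\, dx$ on the affine class $\mathcal{K} := u_0 + \hw{1}{G}_0(\Om)$.

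Next I would record the functional-analytic framework. Since $0 < \delta \le g_0 < \infty$ in \eqref{eq:g prop}, both $G$ and its complementary Young function $G^*$ are doubling (for $G^*$ this follows because $g^{-1}$ inherits an analogous pointwise two-sided power estimate from \eqref{eq:g prop}), which makes $L^G(\Om)$ and $\hw{1}{G}(\Om)$ reflexive and separable with $C_0^\infty(\Om)$ dense in $\hw{1}{G}_0(\Om)$. An Orlicz--Poincar\'e inequality $\|v\|_{L^G(\Om)} \le c(\diam \Om)\, \|\X v\|_{L^G(\Om,\R^{2n})}$ for $v \in \hw{1}{G}_0(\Om)$ follows from the sub-elliptic $L^1$-Poincar\'e inequality combined with doubling. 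These ingredients make $I$ coercive on $\mathcal{K}$ (via the bound $F \ge c_1 G$) and weakly lower semi-continuous, the latter because convexity of $F$ in $\X u$ yields sequential weak lower semi-continuity by Mazur's lemma. A minimizing sequence therefore admits a weakly convergent subsequence whose limit $u \in \mathcal{K}$ realizes $\inf_{\mathcal{K}} I$.

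Differentiating $I(u+\epsilon\varphi)$ at $\epsilon=0$ for $\varphi \in C_0^\infty(\Om)$, with the derivative-integral exchange justified by the growth $|\A(z)|\le L|z|\F(|z|)$, the generalized H\"older inequality \eqref{eq:gen holder}, and Lemma \ref{lem:comp prop} (which together show $\A(\X u) \in L^{G^*}(\Om,\R^{2n})$), yields \eqref{eq:weak soln}, so $u$ is a weak solution. Uniqueness is standard: for two solutions $u_1,u_2 \in \mathcal{K}$, testing the difference of \eqref{eq:weak soln} against $\varphi = u_1-u_2 \in \hw{1}{G}_0(\Om)$ (valid by density) together with the strict monotonicity \eqref{eq:monotone} forces $\X u_1 = \X u_2$ a.e., and then the Poincar\'e inequality gives $u_1=u_2$. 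The main obstacle I anticipate is the Orlicz-Sobolev bookkeeping, namely explicitly tracking the $\Delta_2$ constant of $G^*$ and the Poincar\'e constant through their dependence on $\delta$ and $g_0$; these are folklore but are not recorded in the present excerpt and need to be derived directly from \eqref{eq:g prop}. One could equivalently bypass the potential $F$ altogether and invoke the Browder--Minty surjectivity theorem for the bounded, coercive, monotone operator $u \mapsto -\dvh\A(\X u)$ acting between $\hw{1}{G}_0(\Om)$ and its dual, which avoids the symmetry hypothesis on $D\A$.
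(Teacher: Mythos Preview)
Your argument is correct, but it follows a genuinely different route from the paper. The paper does not construct a potential or minimize a functional; instead it invokes the theory of variational inequalities for monotone operators (Kinderlehrer--Stampacchia, following the scheme of Theorem~17.1 in \cite{Hein-Kilp-Mar}): one shows directly that some $u\in\mathcal K$ satisfies $\int_\Om\inp{\A(\X u)}{\X w-\X u}\,dx\ge 0$ for all $w\in\mathcal K$, deduces \eqref{eq:weak soln} by taking $w=u\pm\varphi$, and obtains uniqueness from the comparison principle (Lemma~\ref{lem:comparison principle}). The existence step there rests only on the monotonicity \eqref{eq:monotone} and coercivity \eqref{eq:elliptic}, with no use of the symmetry of $D\A$; indeed the paper remarks that symmetry is needed \emph{only} for Theorem~\ref{prop:lip existence}. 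Your primary argument, by contrast, exploits symmetry essentially to produce the potential $F$ via the Poincar\'e lemma, so it proves slightly less in that respect---though your closing observation that Browder--Minty would handle the non-symmetric case recovers the same generality. What your approach buys is a more self-contained and constructive argument (explicit two-sided bounds $c_1G(|z|)\le F(z)\le c_2G(|z|)$, weak lower semicontinuity via convexity, direct Euler--Lagrange computation), whereas the paper's route is shorter but leans on an external black box. Your handling of uniqueness via \eqref{eq:monotone} and Poincar\'e is equivalent to, and arguably cleaner than, passing through the comparison principle. The only point to flag is that both your coercivity step and the Orlicz--Poincar\'e inequality tacitly require $\Om$ bounded (or at least of finite measure), an assumption implicit in the Dirichlet setting but not stated in the theorem.
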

The proof of this theorem is a standard variant of that for the Euclidean setting and relies on literature of variational inequalities for monotone operators by Kinderlehrer and Stampacchia \cite{Kind-Stamp}. 
Similarly as the proof of Theorem 17.1 in \cite{Hein-Kilp-Mar}, 
it is possible to show
that 
there exists $u \in\mathcal{K} $ satisfying the 
variational inequality
$$ \int_\Om \inp{\A(\X u)}{\X w-\X u}\dx \geq 0 $$ 
for all $w \in \mathcal{K}$, where 
$ \mathcal{K} = \{ v\in  HW^{1,G}(\Om) \,|\, v - u_0\in HW^{1,G}_0(\Om)\} $. 
Arguing with $w = u\pm\varphi$ for 
any $\varphi \in C^\infty_0(\Om)$, it is easy to see that $u$ satisfies 
\eqref{eq:weak soln} and hence, is a weak solution of \eqref{eq:dirichlet prob}.
The conditions for existence of $u$, can be established from the 
monotonicity \eqref{eq:monotone}.

The uniqueness, follows from the following comparison principle, which can be 
easily proved by choosing an appropriate test function on \eqref{eq:maineq} and 
using monotonicity.
\begin{Lem}[Comparison Principle]\label{lem:comparison principle}
Given $ u,v\in HW^{1,G}(\Om) $, if $u$ and $v$ respectively are weak super and subsolution 
of the equation \eqref{eq:maineq} and 
$u \geq v$ on $\del\Om$ in the trace sense, then we have 
$ u \geq v$ a.e. in $\Om$.
\end{Lem}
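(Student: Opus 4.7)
The plan is to test the weak super- and subsolution inequalities against $\varphi = (v-u)_+$ and exploit the strict monotonicity of $\A$ recorded in \eqref{eq:monotone}.

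Set $w := (v-u)_+$. Since $u,v\in HW^{1,G}(\Om)$ and $v\leq u$ on $\del\Om$ in the trace sense, $w$ lies in $HW^{1,G}_0(\Om)$. Because $G$ is doubling, $C^\infty_0(\Om)$ is dense in $HW^{1,G}_0(\Om)$, so the weak formulation \eqref{eq:weak soln} extends by approximation to all non-negative test functions in this space. Concretely, one approximates $w$ by non-negative $\varphi_k\in C^\infty_0(\Om)$ in the Luxemburg norm of $HW^{1,G}(\Om)$; the pairing $\int_\Om \inp{\A(\X u)}{\X \varphi_k}\dx$ passes to the limit because $\A(\X u),\A(\X v)\in L^{G^*}(\Om,\R^{2n})$, a consequence of the growth bound $|\A(z)|\leq L\,|z|\F(|z|)$ together with Lemma \ref{lem:comp prop} and Theorem \ref{thm:gen holder}.

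Tested against $w\geq 0$, the two inequalities read
\begin{equation*}
\int_\Om \inp{\A(\X u)}{\X w}\dx\,\geq\, 0 \quad\text{and}\quad \int_\Om \inp{\A(\X v)}{\X w}\dx\,\leq\, 0.
\end{equation*}
Subtracting and noting that $\X w=\X v-\X u$ on $\{v>u\}$ while $\X w=0$ elsewhere, one arrives at
\begin{equation*}
\int_{\{v>u\}} \inp{\A(\X v)-\A(\X u)}{\X v-\X u}\dx\,\leq\, 0.
\end{equation*}
The monotonicity inequality \eqref{eq:monotone}, which follows from \eqref{eq:str cond diff} and the positivity of $\F$ on $(0,\infty)$, shows that the integrand is non-negative and in fact strictly positive wherever $\X u\neq \X v$. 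Hence $\X u=\X v$ a.e. on $\{v>u\}$, and therefore $\X w\equiv 0$ a.e. in $\Om$. Combined with $w\in HW^{1,G}_0(\Om)$, a Poincaré-type argument---for instance, applying Theorem \ref{thm:sob emb} to $w$ in $HW^{1,q}_0(B_r)$ on any CC-ball compactly contained in $\Om$ and then exhausting $\Om$---forces $w\equiv 0$ in $\Om$, i.e.\ $v\leq u$ a.e.

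The only genuinely delicate step is the admissibility of the truncation $(v-u)_+$ as a test function in the Orlicz-Sobolev setting; once this density/approximation is secured via the doubling of $G$ and the dual Orlicz--H\"older inequality, the rest is a completely standard monotonicity-plus-Poincar\'e argument, exactly as in the polynomial $p$-Laplace case treated e.g.\ in \cite{Hein-Kilp-Mar}.
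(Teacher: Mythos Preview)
Your argument is exactly what the paper sketches: test with $(v-u)_+$ and invoke the monotonicity \eqref{eq:monotone}. The only slip is in the closing Poincar\'e step---$w$ need not lie in $HW^{1,q}_0(B_r)$ for balls $B_r\subset\subset\Om$, since $w$ has no reason to vanish on $\del B_r$; instead extend $w\in HW^{1,G}_0(\Om)$ by zero and apply Theorem~\ref{thm:sob emb} on a single large ball containing its support, or simply use that $\X w\equiv 0$ forces $w$ to be locally constant by the bracket-generating property of $X_1,\dots,X_{2n}$ and then conclude from the zero trace.
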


We would also require that, the weak solution of the 
Dirichlet problem \eqref{eq:dirichlet prob} is Lipschitz with respect to CC-metric, if it has smooth boundary value in strictly convex domain. The proof of this resembles the Hilbert-Haar theory in the Euclidean setting. Actually, this is the only place where we require that $D\A$ is symmetric. 

Consider a bounded domain $D\subset \R^{2n+1}$ which is convex and 
there exists a constant $\eps_0>0$ such that the following holds:  
for every $y\in \del D$, 
there exists $b(y)\in \R^{2n+1}$ with $|b(y)| =1$, such that  
\begin{equation}\label{eq:str conv}
b(y)\cdot (x-y) \geq \eps_0 |x-y|^2
\end{equation}
for all $x\in \bar D$. 
Here $(\cdot)$ is the Euclidean inner product and 
$|.|$ is the Euclidean norm of $\R^{2n+1}$. The following 
theorem shows existence of Lipschitz continuous solutions of \eqref{eq:maineq}. 
The statement and the proof of this theorem, are the same as those of Theorem 5.1 of \cite{Zhong}. For sake of completeness, we provide a sketch of the proof.
\begin{Thm}\label{prop:lip existence}
  Let $D \subset \hh^n$ be a bounded and convex domain satisfying 
  \eqref{eq:str conv} for some $\eps_0>0$. 
  Given $u_0\in C^2(\bar D)$, if $u\in HW^{1,G}(D)$ is the weak solution of
the Dirichlet problem
\begin{equation}\label{eq:Dprob}
\begin{cases}
&\dvh (\A(\Xu))=0 \quad  \text{ in } D;\\
& u-u_0\in HW^{1,G}_0(D).
\end{cases}
\end{equation}
then 
there exists a constant  
 $ M = M\big(n,\,\eps_0,\, 
 \|\gr u_0\|_{L^\infty(\bar D)}+\|D^2 u_0\|_{L^\infty(\bar D)},\,\diam(D) \big)>0$, such that we have
 $$ \norm{\X u}_{L^\infty(D)} \leq M .$$
 \end{Thm}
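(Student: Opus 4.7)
The proof adapts the Hilbert--Haar approach to the Heisenberg setting, following the strategy of Theorem 5.1 in \cite{Zhong}. The argument proceeds in two stages: first, a Lipschitz estimate at every boundary point obtained from barriers that exploit strict convexity; and second, propagation of this estimate to the interior using the left-invariance of \eqref{eq:maineq} together with the comparison principle (Lemma \ref{lem:comparison principle}).

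\textbf{Stage 1: Boundary Lipschitz estimate.} The plan is to show that there exists a constant $M_0$, depending only on the parameters listed in the statement, such that for every $y_0 \in \partial D$ and every $x \in \bar D$,
$$ |u(x) - u_0(y_0)| \leq M_0\, |x - y_0|. $$
For each $y_0 \in \partial D$, I would construct two barrier functions $w^\pm_{y_0} \in HW^{1,G}(D) \cap C(\bar D)$ with $w^-_{y_0}(y_0) = u_0(y_0) = w^+_{y_0}(y_0)$, with $w^-_{y_0} \leq u_0 \leq w^+_{y_0}$ on $\partial D$, with $w^+_{y_0}$ a weak supersolution and $w^-_{y_0}$ a weak subsolution of \eqref{eq:maineq}, and with $|w^\pm_{y_0}(x) - u_0(y_0)| \leq M_0\, |x-y_0|$ on $\bar D$. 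A natural candidate is of the form $w^\pm_{y_0}(x) = u_0(y_0) + a_{y_0}\cdot(x-y_0) \pm K\, b(y_0)\cdot(x-y_0)$ for a suitable $a_{y_0}\in\R^{2n+1}$ and $K$ large; by strict convexity \eqref{eq:str conv} the term $b(y_0)\cdot(x-y_0) \geq \eps_0 |x-y_0|^2 \geq 0$ absorbs the $C^2$-deviation of $u_0$ from its tangent plane at $y_0$, sandwiching $u_0$ on $\partial D$. The comparison principle then gives the displayed inequality.

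\textbf{Stage 2: From boundary to interior via translation.} Since the vector fields $X_1,\ldots,X_{2n}$ and hence the operator $\dvh\A(\X\,\cdot\,)$ are left-invariant on $\hh^n$, for any sufficiently small $\tau \in \hh^n$ the function $u_\tau(x) := u(\tau\cdot x)$ is also a weak solution of \eqref{eq:maineq}, now on $\tau^{-1}\cdot D$. On the boundary of $D \cap (\tau^{-1}\cdot D)$, either $x \in \partial D$ (so $u(x) = u_0(x)$) or $\tau\cdot x \in \partial D$ (so $u_\tau(x) = u_0(\tau\cdot x)$); in both situations, Stage 1 combined with the $C^2$-regularity of $u_0$ yields
$$ |u(x) - u_\tau(x)| \leq C\, \dhh(\tau,0), $$
for a constant $C$ depending on the same data as $M_0$. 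The comparison principle applied to $u$ and $u_\tau$ on $D \cap (\tau^{-1}\cdot D)$ propagates this inequality to the entire intersection. Specialising $\tau$ to one-parameter subgroups generated by the horizontal fields $X_i$ and dividing by the parameter gives $|X_i u| \leq M$ a.e.\ in $D$.

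\textbf{Main obstacle.} The central difficulty is in Stage 1: ensuring the candidate barriers are genuine weak super/subsolutions of the quasilinear equation under the generalised structure \eqref{eq:str cond diff}, rather than only in the $p$-Laplace case. Since $\A$ is not of scalar form, one must use the symmetry of $D\A$ together with the doubling bounds \eqref{eq:g prop} and \eqref{eq:gdoub} to control the sign of $\dvh\A(\X w^\pm_{y_0})$; this is precisely the point at which, as the authors remark, symmetry of $D\A$ is essential. The strict convexity constant $\eps_0$ enters through the quadratic lower bound for $b(y_0)\cdot(x-y_0)$, which is what allows the $C^2$-error of $u_0$ to be absorbed once $K$ is chosen large in terms of $\|D^2 u_0\|_{L^\infty}$ and $\eps_0$.
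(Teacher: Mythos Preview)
Your overall architecture matches the paper's, but there is one missed simplification in Stage~1 and one genuine gap in Stage~2.

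\textbf{Stage 1.} The ``main obstacle'' you single out is not an obstacle at all: the Euclidean--affine barriers
\[
L^\pm(x)=u_0(y_0)+\big[\gr u_0(y_0)\pm K\,b(y_0)\big]\cdot(x-y_0)
\]
are \emph{exact solutions} of \eqref{eq:maineq}, not merely sub/supersolutions. A direct computation from \eqref{eq:Xdef} shows that for any Euclidean--affine function $\ell$ the horizontal Hessian $\XX\ell$ is skew--symmetric (its only nonzero entries come from the commutators \eqref{eq:comm}). Since $D\A$ is assumed symmetric,
\[
\dvh\A(\X L^\pm)=\sum_{i,j}D_j\A_i(\X L^\pm)\,X_iX_j L^\pm=\Tr\big(D\A(\X L^\pm)\,\XX L^\pm\big)=0.
\]
No sign control and no use of \eqref{eq:g prop} or \eqref{eq:gdoub} is needed here; this is precisely the (only) place where symmetry of $D\A$ enters, and it resolves your obstacle in one line. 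The paper then applies the comparison principle to the \emph{solutions} $L^\pm$ exactly as you outline.

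\textbf{Stage 2.} Your translation argument does yield, on $\partial\big(D\cap\tau^{-1}\!\cdot D\big)$ and hence in the interior, the bound $|u(x)-u(\tau\cdot x)|\le C\,\dhh(\tau,0)$. The gap is in the last sentence. For $\tau=\exp(tX_i)(0)=te_i$ one has
\[
\frac{d}{dt}\Big|_{t=0}u(te_i\cdot x)=\Big(\partial_{x_i}+\tfrac{x_{n+i}}{2}\partial_t\Big)u(x),
\]
which is the \emph{right}-invariant horizontal field, not $X_iu=(\partial_{x_i}-\tfrac{x_{n+i}}{2}\partial_t)u$. Equivalently, your inequality controls $|u(p)-u(q)|$ by $\|q\cdot p^{-1}\|_{\hh^n}$, whereas $\|\X u\|_{L^\infty}$ corresponds to the left-invariant distance $\|p^{-1}\cdot q\|_{\hh^n}$; on a bounded domain these differ in the vertical component by $O(|q'-p'|)$, which after taking the Kor\`anyi norm only yields a $C^{0,1/2}$ bound. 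The paper avoids this issue by invoking the Hilbert--Haar identity \eqref{eq:haar} from \cite{Zhong}, namely that for a Lipschitz solution the CC--Lipschitz constant over $\bar D\times\bar D$ equals the one over $\bar D\times\partial D$; combined with \eqref{eq:lip} this gives \eqref{eq:toshow} directly.
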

\begin{proof}
This proof is the same as that of Theorem 5.1 in \cite{Zhong}, with minor changes. Here, we provide a brief outline for the reader's convenience. It is enough to 
show that 
\begin{equation}\label{eq:toshow}
|u(x)-u(y)|\leq M d(x,y)\quad \forall\ x,y\in\bar D
\end{equation}
for some constant $ M = M\big(n,\,\eps_0,\, 
 \|\gr u_0\|_{L^\infty(\bar D)}+\|D^2 u_0\|_{L^\infty(\bar D)},\,\diam(D) \big)>0$. 
 
To this end, we fix $y\in \del D$ and consider the barrier functions 
$$ L^\pm(x)= u_0(y)+ [\gr u_0(y)\pm K\,b(y)]\cdot (x-y), 
\ \ \text{where}
\ K = \frac{(2n+1)^2}{2\eps_0}\|D^2 u_0\|_{L^\infty(\bar D)}$$ 
Taking $\xi$ as an appropriate point between $x$ and $y$ and 
using the Taylor's 
formula followed by the condition \eqref{eq:str conv}, we obtain
\begin{align*}
u_0(x) &= u_0(y) + \gr u_0(y)\cdot (x-y)
+ \frac{1}{2}D^2u_0(\xi)(x-y)\cdot(x-y) \\
&\leq u_0(y) + \gr u_0(y)\cdot (x-y)
+ K\eps_0 |x-y|^2 \leq L^+(x)
\end{align*}
and similarly $L^-(x)\leq u_0(x)$ for 
all $ x\in \bar D$. Thus, if $u\in HW^{1,G}(D)$ is the weak solution of \eqref{eq:Dprob}, since $u_0$ is continuous on the boundary, 
we have 
\begin{equation}\label{eq:bdineq}
L^-(x) \leq u(x) \leq L^+(x)\quad \forall\ x\in\del D 
\end{equation}
upto a continuous representative of $u$. 
Now, letting $b(y)=(b_1(y),\ldots,b_{2n}(y), b_t(y)) \in \R^{2n}\times \R$ and explicit computations using 
\eqref{eq:Xdef}, we find that $\XX L^\pm$ is skew-symmetric. 
Precisely, 
$$ \XX L^\pm(x) = \ \frac{1}{2}\big[\del_t u_0(y)\pm K\,b_t(y)\big]\begin{pmatrix}
    0 & I_n\\
    -I_n & 0
   \end{pmatrix}  $$
for every $x \in \bar D$. Since the 
matrix $D\A(z)$ has been assumed to be symmetric, we have 
$\dvh [\A(\X L^\pm)] = \Tr\big(D\A(\X L^\pm)^T\XX L^\pm\big) = 0 $. 
Thus, $L^\pm$ are solutions of the 
equation \eqref{eq:Dprob}. 
This, together with \eqref{eq:bdineq} and comparison principle (Lemma \ref{lem:comparison principle}), implies   
$$ L^-(x)\leq u(x)\leq L^+(x) \quad \forall\ x\in D.$$ 
Since $L^\pm$ are Lipschitz and $L^\pm(y)=u(y)$, it is evident that  
there exists $M>0$ such that 
\begin{equation}\label{eq:lip}
  -Md(x,y) \leq u(x)-u(y) \leq M d(x,y) \quad \forall\ x\in\bar D,y\in\del D
\end{equation}
Now, we need the fact that if $u$ is a Lipschitz solution of \eqref{eq:Dprob}, then the following holds, 
\begin{equation}\label{eq:haar}
\sup_{x,y\in \bar D}\left(\,\frac{|u(x)-u(y)|}{d(x,y)}\, \right)= 
\sup_{x\in\bar D,\, y\in \del D}\left(\,\frac{|u(x)-u(y)|}{d(x,y)}\,\right).
\end{equation}
We refer to \cite{Zhong} for a proof of 
\eqref{eq:haar}. From \eqref{eq:lip} and \eqref{eq:haar}, we 
immediately get \eqref{eq:toshow} and the proof is finished. 
\end{proof}
 
\section{Local Boundedness of Horizontal gradient}\label{sec:LocalB}

We shall prove Theorem \ref{thm:mainthm} in this section. 
In the following three subsections we prove some Caccioppoli type inequalities of the horizontal and vertical vector fields, under two supplementary assumptions 
(see \eqref{eq:ass1} and \eqref{eq:ass2} below). The proof of Theorem \ref{thm:mainthm} is given at the end of this section, where we remove both assumptions one by one.

Throughout this section, we denote $ u\in HW^{1,G}(\Om) $ as a weak solution of \eqref{eq:maineq}. We assume the growth and ellipticity conditions  
\eqref{eq:str cond diff}, retaining the notations 
\eqref{eq:GandF}.

Now we make two supplementary assumptions:
\begin{align}\label{eq:ass1}
&(1)\  \text{there exists}\ m_1,m_2>0 \ \text{such that}
\ \lim_{t \to 0}\F(t)=m_1\ \text{and}\ \lim_{t \to \infty}\F(t)=m_2;\\ 
\label{eq:ass2}
&(2)\  \text{there exists}\ M>0 \ \text{such that}\ \|\X u\|_{L^\infty(\Om)} \leq M.
\end{align} 
The purpose of the assumptions, is to ensure weak-differentiability of weak solutions 
of the equation \eqref{eq:maineq}. Since $\F(t)=g(t)/t$ and $g$ is monotonic, 
$\F$ has possible singularities at $t\to 0$ or $t\to \infty$ (or both). The 
assmption \eqref{eq:ass1} avoids this and consequently, the structure condition 
\eqref{eq:str cond diff}
together with \eqref{eq:ass1} and \eqref{eq:ass2}, imply
\begin{equation}\label{eq:str cond new}
 \begin{aligned}
 \inv{\nu}|\xi|^2 \leq \,&\inp{D\A(\X u)\,\xi}{\xi}\leq \nu\,|\xi|^2;\\
 &|\A(\X u)|\leq \nu\,|\X u|,
 \end{aligned}
\end{equation}
for some $\nu = \nu(g_0,L, M,m_1,m_2)>0$. 
Thus, the equation \eqref{eq:maineq} with \eqref{eq:str cond new}, 
satisfies the conditions  
considered by Capogna \cite{Cap--reg}. 
From Theorem 1.1 and Theorem 3.1 of \cite{Cap--reg}, we get 
\begin{equation}\label{eq:Cap reg}
\X u \in HW^{1,2}_\loc(\Om,\R^{2n})\cap C^{0,\alpha}_\loc(\Om,\R^{2n}) ,
\ \  Tu  \in HW^{1,2}_\loc(\Om)\cap C^{0,\alpha}_\loc(\Om) .
\end{equation}
However, every apriori estimates that follow in this section, are independent of the constants $M,m_1,m_2$. 
This enables us to remove both the assumptions \eqref{eq:ass1} and \eqref{eq:ass2}, in the end.

\subsection{Caccioppoli type inequalities}\label{subsec:Caccioppoli type inequalities}\noindent
\\
By virtue of \eqref{eq:Cap reg}, we can weakly differentiate the equation \eqref{eq:maineq} and obtain the equations satisfied by $X_lu$ and $Tu$ in the weak sense. This is 
shown in the following two lemmas.
\begin{Lem}\label{lem: Tu}
If $ u \in HW^{1,G}(\Om) $ is a weak solution of \eqref{eq:maineq}, then $Tu$ is a weak solution of 
\begin{equation}\label{eq:Tu}
\sum_{i,j=1}^{2n}X_i(D_j\A_i(\X u)X_j(Tu))=0.
\end{equation}
\end{Lem}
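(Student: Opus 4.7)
The plan is to obtain the equation for $Tu$ by differentiating the weak formulation of \eqref{eq:maineq} in the vertical direction, using difference quotients so as to avoid regularity beyond what \eqref{eq:Cap reg} guarantees. Since $T$ is central in the Lie algebra of $\hh^n$ (the only non-trivial commutators in \eqref{eq:comm} involve pairs $X_i, X_{n+i}$), we have $[T, X_i] = 0$ for every $i$. Equivalently, left translation by the central element $(0,\ldots,0,h)$ commutes with every left-invariant horizontal vector field $X_i$, so the difference quotient $\Delta_h u(x) := (u(x \cdot (0,\ldots,0,h)) - u(x))/h$ satisfies $\X(\Delta_h u) = \Delta_h (\X u)$, and $\Delta_h u \to Tu$ in $L^2_\loc$ as $h \to 0$ with $\X(\Delta_h u) \to T \X u$ in $L^2_\loc$ by \eqref{eq:Cap reg}.

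First I would fix $\varphi \in C^\infty_0(\Om)$, take $|h|$ small enough so that $\Delta_{-h}\varphi \in C^\infty_0(\Om)$, and test \eqref{eq:weak soln} against $\Delta_{-h}\varphi$. A standard discrete integration-by-parts (using that the change of variables $x\mapsto x\cdot(0,\ldots,0,h)$ has Jacobian $1$ and commutes with each $X_i$) transfers the difference quotient onto $\A(\X u)$, yielding
\begin{equation*}
\int_\Om \bigl\langle \Delta_h \A(\X u),\, \X \varphi \bigr\rangle \dx = 0.
\end{equation*}
Next I would pass to the limit $h \to 0$. By \eqref{eq:ass2} the range of $\X u$ sits in a bounded set, so $D\A(\X u)$ is bounded thanks to \eqref{eq:str cond new}; combined with $T\X u \in L^2_\loc$ from \eqref{eq:Cap reg} and the chain rule for Sobolev functions composed with $C^1$ maps, one obtains $\Delta_h \A(\X u) \to T\A(\X u)$ in $L^2_\loc(\Om,\R^{2n})$, with the identification
\begin{equation*}
T\A_i(\X u) = \sum_{j=1}^{2n} D_j\A_i(\X u)\, T(X_j u) = \sum_{j=1}^{2n} D_j\A_i(\X u)\, X_j(Tu),
\end{equation*}
where in the last equality I use $[T, X_j] = 0$. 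Passing to the limit therefore gives
\begin{equation*}
\int_\Om \sum_{i,j=1}^{2n} D_j\A_i(\X u)\, X_j(Tu)\, X_i\varphi \dx = 0,
\end{equation*}
which is exactly the weak form of \eqref{eq:Tu}.

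The main obstacle is purely the justification of the limit passage and the chain rule: a priori, $\A(\X u)$ is only known to be in $L^2_\loc$ together with its horizontal derivatives, so the chain rule identification of $T\A(\X u)$ requires both the $C^1$ smoothness of $\A$ and the bound \eqref{eq:str cond new} (which is where the temporary assumptions \eqref{eq:ass1} and \eqref{eq:ass2} enter). Once those are in force, Capogna's regularity \eqref{eq:Cap reg} supplies the $L^2_\loc$ convergence needed to close the argument. Everything else is formal commutation of $T$ with the $X_i$'s and a routine integration by parts in the vertical direction.
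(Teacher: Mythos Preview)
Your argument is correct and is exactly the standard one: the paper omits the proof here, but the method it indicates (via the analogous Lemma~\ref{lem:Xu} and the reference to \cite{Zhong}) is to test \eqref{eq:weak soln} with $T\varphi$, commute $T$ past each $X_i$, integrate by parts in $T$, and apply the chain rule. Your difference-quotient formulation is simply the rigorous rendering of that same computation, with the limit passage justified precisely by \eqref{eq:Cap reg} and the bounds \eqref{eq:str cond new} coming from the standing assumptions \eqref{eq:ass1}--\eqref{eq:ass2}.
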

The proof of the above lemma is quite easy and similar to Lemma 3.2 in \cite{Zhong}. So, we omit the proof. The following lemma is similar to 
Lemma 3.1 in \cite{Zhong}.
\begin{Lem}\label{lem:Xu}
If $ u \in HW^{1,G}(\Om) $ is a weak solution of \eqref{eq:maineq}, then
for any $ l \in \{1,\ldots, n\}$, we have that $ X_lu$ is weak solution of 
\begin{equation}\label{eq:Xu}
\sum_{i,j=1}^{2n} X_i(D_j\A_i(\X u)X_jX_lu)
+\sum_{i=1}^{2n}X_i(D_i\A_{n+l}(\X u)Tu) +T(\A_{n+l}(\X u))= 0
\end{equation}
and similarly, $X_{n+l}u $ is weak solution of
\begin{equation}\label{eq:Xu other}
\sum_{i,j=1}^{2n} X_i(D_j\A_i(\X u)X_jX_{n+l}u)
-\sum_{i=1}^{2n}X_i(D_i\A_{l}(\X u)Tu) -T(\A_{l}(\X u))= 0.
\end{equation}
\end{Lem}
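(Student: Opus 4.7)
The plan is to differentiate the equation \eqref{eq:maineq} in the direction $X_l$ (respectively $X_{n+l}$) in the weak sense, carefully tracking the commutators between the horizontal vector fields. Throughout, the differentiations are legitimate because of the regularity recorded in \eqref{eq:Cap reg}: $\X u\in HW^{1,2}_\loc(\Om,\R^{2n})$ and $Tu\in L^2_\loc(\Om)$, so $\A(\X u)$ has weak horizontal derivatives and a weak $T$-derivative in $L^2_\loc$.

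First, given an arbitrary $\psi\in C^\infty_0(\Om)$, test the weak formulation \eqref{eq:weak soln} with $-X_l\psi\in C^\infty_0(\Om)$ to obtain
\[
\int_\Om \sum_{i=1}^{2n}\A_i(\X u)\,X_iX_l\psi\dx =0.
\]
Next, use the commutator relations \eqref{eq:comm}: $X_iX_l=X_lX_i+[X_i,X_l]$, where $[X_i,X_l]$ vanishes unless $i=n+l$, in which case $[X_{n+l},X_l]=-T$. This yields
\[
\int_\Om \sum_{i=1}^{2n}\A_i(\X u)\,X_lX_i\psi \dx - \int_\Om \A_{n+l}(\X u)\,T\psi \dx =0.
\]
Integrating by parts in $X_l$ on the first integral (using that $X_l$ is skew-adjoint on compactly supported functions with respect to Lebesgue measure, since $X_l$ is left-invariant and Lebesgue measure is the Haar measure) moves the $X_l$ onto $\A_i(\X u)$ with a sign change, giving
\[
\int_\Om \sum_{i=1}^{2n} X_l\bigl[\A_i(\X u)\bigr]\, X_i\psi \dx + \int_\Om \A_{n+l}(\X u)\,T\psi \dx =0.
\]

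Now apply the chain rule $X_l[\A_i(\X u)]=\sum_j D_j\A_i(\X u)\,X_lX_ju$ and commute a second time: $X_lX_ju=X_jX_lu+[X_l,X_j]u$, where the commutator is nonzero only for $j=n+l$ with $[X_l,X_{n+l}]=T$. Consequently
\[
X_l\bigl[\A_i(\X u)\bigr]=\sum_{j=1}^{2n}D_j\A_i(\X u)\,X_jX_lu + D_{n+l}\A_i(\X u)\,Tu.
\]
Invoking the symmetry of $D\A$ to replace $D_{n+l}\A_i$ by $D_i\A_{n+l}$ and inserting into the identity above yields exactly the weak form of \eqref{eq:Xu}:
\[
\int_\Om\!\Bigl[\sum_{i,j=1}^{2n} D_j\A_i(\X u)\,X_jX_lu\,X_i\psi +\sum_{i=1}^{2n} D_i\A_{n+l}(\X u)\,Tu\,X_i\psi+\A_{n+l}(\X u)\,T\psi\Bigr]dx=0.
\]
The proof of \eqref{eq:Xu other} is identical after replacing $X_l$ by $X_{n+l}$ throughout; the only sign change comes from $[X_{n+l},X_l]u=-Tu$ (and the analogous flip when computing $X_{n+l}[\A_i(\X u)]$), which produces the two minus signs in \eqref{eq:Xu other}.

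There is no real obstacle here; the argument is essentially bookkeeping. The only subtle ingredient is the symmetry hypothesis on $D\A$, which is precisely what lets the mixed second-order term appear in the standard divergence form $\sum_i X_i(D_i\A_{n+l}(\X u)\,Tu)$ rather than the less convenient $\sum_i X_i(D_{n+l}\A_i(\X u)\,Tu)$; this matters for structural reasons in the Caccioppoli estimates to follow in Section~\ref{sec:LocalB}. One should also check that the integrations by parts are justified in the class \eqref{eq:Cap reg}, but this is standard by density and the $L^2_\loc$ bounds on $\XX u$ and $Tu$.
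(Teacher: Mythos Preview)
Your proof is correct and follows the same approach as the paper: test \eqref{eq:weak soln} with $X_l\psi$, commute once to extract the $T\psi$ term, integrate by parts in $X_l$, then apply the chain rule with a second commutation to obtain \eqref{eq:conn}. The only cosmetic difference is that the paper integrates the $T$-term by parts (writing $T(\A_{n+l}(\X u))\varphi$ rather than $\A_{n+l}(\X u)\,T\psi$), and you make explicit the use of the symmetry $D_{n+l}\A_i=D_i\A_{n+l}$, which the paper invokes silently in \eqref{eq:conn}.
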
\noindent

\begin{proof}
 We only prove \eqref{eq:Xu}, the proof of
\eqref{eq:Xu other} is similar. 
Let $l\in \{1,2,\ldots,n\}$ and 
$\varphi\in
C^\infty_0(\Omega)$ be fixed. We choose test function $X_l\varphi$ in \eqref{eq:maineq} to get
$$ \int_\Om \sum_{i=1}^{2n} \A_i(\X u)X_iX_l\varphi\dx=0. $$
Recalling the commutation relation \eqref{eq:comm} and using integral by parts, we obtain 
\begin{equation}\label{eq:weak}
\begin{aligned}
 0
=& \int_\Om \sum_{i=1}^{2n} \A_i(\X u)X_lX_i\varphi\dx -\int_\Om \A_{n+l}(\X u) T\varphi\dx\\
=& -\int_\Om \sum_{i=1}^{2n} X_l (\A_i(\X u)X_i\varphi)dx+ \int_\Om T(\A_{n+l}(\X u))\varphi\dx.\\
\end{aligned}
\end{equation}
From \eqref{eq:comm} again, notice that for every $i\in \{1,2,\ldots,2n\}$, 
\begin{equation}\label{eq:conn}
X_l(\A_i (\X u))=  \sum_{j=1}^{2n} D_j\A_i(\X u)X_jX_lu+ D_i\A_{n+l}(\X u)Tu. 
\end{equation}
Thus, \eqref{eq:weak} and \eqref{eq:conn} together completes the proof. 
\end{proof}
The following Caccioppoli type inequality for $Tu$ is quite standard. We provide a proof for the reader's convenience. 
\begin{Lem}\label{lem:cacci T}
For any $\gamma\geq 0$ and $\eta\in C^\infty_0(\Omega)$, there exists 
$c = c(n,g_0,L)>0$ such that 
\begin{equation*}
\int_\Omega \eta^2G(| Tu|)^{\gamma+1}\F(|\X u|) |\X( Tu)|^2\,
dx\leq \frac{c}{(\gamma+1)^2}\int_\Omega G(| Tu|)^{\gamma+1}\F(|\X u|)|  Tu|^2|\X\eta
|^2 dx.
\end{equation*}
\end{Lem}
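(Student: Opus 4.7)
The plan is to test the weak formulation of equation \eqref{eq:Tu} for $Tu$ (from Lemma \ref{lem: Tu}) against a carefully chosen test function of the form $\varphi = \eta^2 \Phi(Tu)$, where $\Phi:\R\to\R$ is an odd $C^1$ function to be determined. By \eqref{eq:Cap reg}, $Tu \in HW^{1,2}_\loc\cap C^{0,\alpha}_\loc$, so such a $\varphi$ is an admissible test function when $\eta\in C^\infty_0(\Om)$. Substituting into \eqref{eq:Tu} and expanding $X_i\varphi = 2\eta \Phi(Tu) X_i\eta + \eta^2 \Phi'(Tu) X_i(Tu)$, one obtains
\begin{equation*}
\int_\Omega \eta^2 \Phi'(Tu) \sum_{i,j} D_j\A_i(\X u) X_i(Tu)X_j(Tu)\,dx = -2\int_\Omega \eta\, \Phi(Tu)\sum_{i,j}D_j\A_i(\X u)X_j(Tu)X_i\eta\,dx.
\end{equation*}

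Next I would apply the structure condition \eqref{eq:str cond diff}: the ellipticity gives a lower bound $\F(|\X u|)|\X(Tu)|^2$ for the integrand on the left, while on the right the upper bound and Cauchy--Schwarz yield a factor $L\,\F(|\X u|)|\X(Tu)||\X\eta|$. A standard Young's inequality with a small parameter then absorbs the $|\X(Tu)|$ factor into the left-hand side, producing the estimate
\begin{equation*}
\int_\Omega \eta^2 \Phi'(Tu)\F(|\X u|)|\X(Tu)|^2\,dx \leq c(L)\int_\Omega \frac{\Phi(Tu)^2}{\Phi'(Tu)}\F(|\X u|)|\X\eta|^2\,dx.
\end{equation*}

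The crucial step, and the only place where some care is needed, is the choice of $\Phi$ so that $\Phi'$ matches $G(|Tu|)^{\gamma+1}$ on the left while the ratio $\Phi^2/\Phi'$ produces the factor $(\gamma+1)^{-2}|Tu|^2 G(|Tu|)^{\gamma+1}$ on the right. I would take $\Phi$ to be the odd extension of $s\mapsto s\,G(s)^{\gamma+1}$ on $[0,\infty)$, so that
\begin{equation*}
\Phi'(s) = G(s)^{\gamma+1}\Bigl[1 + (\gamma+1)\frac{s\,g(s)}{G(s)}\Bigr].
\end{equation*}
By \eqref{eq:gG2}, the ratio $sg(s)/G(s)$ lies in $[1,1+g_0]$, hence $(\gamma+1)\,G(s)^{\gamma+1}\leq \Phi'(s)\leq (2+g_0)(\gamma+1)G(s)^{\gamma+1}$. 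The lower bound gives the factor $(\gamma+1)G(|Tu|)^{\gamma+1}$ on the left; the same lower bound, together with $\Phi(s)^2 = s^2 G(s)^{2(\gamma+1)}$, yields $\Phi(s)^2/\Phi'(s) \leq (\gamma+1)^{-1}s^2 G(s)^{\gamma+1}$ on the right. Dividing through by $\gamma+1$ produces the claimed $(\gamma+1)^{-2}$ factor with $c = c(g_0,L)$. The main (minor) obstacle is simply the bookkeeping in verifying the two-sided bounds on $\Phi'$, and noting that approximation by smooth odd functions justifies using this $\Phi$ as a test function even when $\delta$ is close to $0$.
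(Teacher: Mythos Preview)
Your proof is correct and is essentially the same as the paper's: the odd extension of $s\mapsto s\,G(s)^{\gamma+1}$ gives $\Phi(Tu)=Tu\,G(|Tu|)^{\gamma+1}$, which is exactly the test function $\varphi=\eta^2 G(|Tu|)^{\gamma+1}Tu$ used in the paper, and the subsequent use of the structure condition \eqref{eq:str cond diff}, the bound \eqref{eq:gG2}, and Young's inequality with a small parameter proceeds identically. The only difference is presentational---you package the derivative computation via $\Phi'$ and $\Phi^2/\Phi'$, whereas the paper writes out the two left-hand terms explicitly before combining them.
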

\begin{proof}
For some fixed $ \eta \in C^\infty_0(\Omega)$ and $ \gamma\geq 0 $, we choose test funcion $$\varphi=\eta^2G(| Tu|)^{\gamma+1} Tu $$ in the equation \eqref{eq:Tu} to 
get 
\begin{equation*}
  \begin{aligned}
\sum_{i,j=1}^{2n}\int_\Omega \eta^2 &G(| Tu|)^{\gamma+1}D_j\A_i(\X u)X_j(Tu)X_i(Tu)\, dx \\
   +\ (\gamma+1)&\sum_{i,j=1}^{2n}\int_\Omega  \eta^2 G(| Tu|)^{\gamma}g(| Tu|)| Tu|D_j\A_i(\X u)X_j(Tu)X_i(Tu)\dx \\
   &=  -2\sum_{i,j=1}^{2n}\int_\Omega \eta\, G(| Tu|)^{\gamma+1}Tu D_j\A_i(\X u)X_j(Tu)X_i\eta\, dx  .
  \end{aligned}
 \end{equation*}
We use the condition \eqref{eq:gG2} on the first term and 
then use the structure condition \eqref{eq:str cond diff}, to estimate both 
sides of the above equality. 
We obtain
\begin{align*}
 \int_\Omega \eta^2 G(| Tu&|)^{\gamma+1}\weight |\X(Tu)|^2\dx\\
 &\leq \frac{c}{(\gamma+1)}\int_\Omega |\eta| G(| Tu|)^{\gamma+1}|Tu|
 \weight |\X(Tu)||\X\eta|\dx\\
 &\leq\ c\tau\int_\Omega \eta^2 G(| Tu|)^{\gamma+1}\weight |\X(Tu)|^2\dx\\
 &\qquad\quad+ \frac{c}{\tau(\gamma+1)^2}\int_\Omega G(| Tu|)^{\gamma+1}
 \F(|\X u|)|Tu|^2|\X\eta|^2 \dx,
 \end{align*}
where we have used Young's inequality to obtain the latter inequality of the above. With the choice of a small enough $\tau>0$, the proof is finished.
\end{proof}
The following Caccioppoli type inequality for the horizontal vector fields is more involved than the above, due to the non-commutativity \eqref{eq:comm}. 
For the case of $p$-Laplace equations, similar inequalities
have been proved before, using difference quotients for $ 2\leq p\leq 4$ in \cite{Min-Z-Zhong} and directly, for $1<p<\infty$ in \cite{Zhong}.
\begin{Lem}\label{lem:cacci Xu}
For any $\gamma\geq 0$ and $\eta\in C^\infty_0(\Omega)$, there exists 
$c = c(n,g_0,L)>0$ such that 
\begin{equation*}
\begin{aligned}
\int_{\Om}\eta^2G(|\X u|)^{\gamma+1}\weight|\XX u|^2 \dx\leq &
\ c\int_\Om G(|\X u|)^{\gamma+1}|\X u|^2\weight\left(| \X \eta|^2+|\eta T\eta|\right) dx\\
&+ c\,(\gamma+1)^4\int_\Om\eta^2G(|\X u|)^{\gamma+1}\weight| Tu|^2\dx.
\end{aligned}
\end{equation*}
\end{Lem}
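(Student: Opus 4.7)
The plan is to test equations \eqref{eq:Xu} and \eqref{eq:Xu other} of Lemma \ref{lem:Xu} with $\varphi = \eta^2 G(|\X u|)^{\gamma+1} X_l u$ and $\varphi = \eta^2 G(|\X u|)^{\gamma+1} X_{n+l} u$ respectively, and sum the resulting identities over $l = 1,\ldots, n$. The combined principal contributions then produce the full horizontal Hessian $|\XX u|^2$ on the left-hand side via ellipticity. This is the Orlicz analogue of the strategy used in \cite{Zhong} for the $p$-Laplace case, with $G(|\X u|)^{\gamma+1}$ in place of a power-type weight.

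Expanding $X_i\varphi$ and $T\varphi$, the identity splits into four groups of terms. The principal diagonal term $\sum_{i,j,l}\int \eta^2 G(|\X u|)^{\gamma+1} D_j\A_i(\X u) X_j X_l u \cdot X_i X_l u\dx$ is bounded below by $c\int \eta^2 G(|\X u|)^{\gamma+1}\weight|\XX u|^2\dx$ via the ellipticity in \eqref{eq:str cond diff}. The additional contribution from $X_i\bigl[G(|\X u|)^{\gamma+1}\bigr]$ inserted into the principal sum has the form $\sum_{i,j}\int \eta^2 (\gamma+1) G(|\X u|)^\gamma \weight D_j\A_i(\X u) V_j V_i\dx$ with $V_i = \sum_k X_k u\, X_i X_k u$; by the symmetry and positive-definiteness of $D\A$, this quadratic form in $V$ is non-negative and can be discarded. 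The cutoff terms from $X_i(\eta^2)$, together with Young's inequality, the upper bound $|\A(z)|\leq L|z|\weight$, and the estimate $|\X u|^2\weight\leq (1+g_0) G(|\X u|)$ from Lemma \ref{lem:gandG prop}, produce the $|\X\eta|^2$ contribution plus a small absorbable multiple of $\int\eta^2 G(|\X u|)^{\gamma+1}\weight|\XX u|^2\dx$.

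The main obstacle is the fourth group, which couples $\XX u$ to $Tu$. These arise from the $Tu$-multiplier $\sum_i\int D_i\A_{n+l}(\X u) Tu \cdot X_i\varphi\dx$ in equation \eqref{eq:Xu}, and from the vertical piece $-\int \A_{n+l}(\X u) T\varphi\dx$. We expand $T\varphi = 2\eta T\eta\, G(|\X u|)^{\gamma+1} X_l u + \eta^2 T[G(|\X u|)^{\gamma+1}] X_l u + \eta^2 G(|\X u|)^{\gamma+1} T X_l u$. The first summand yields the $|\eta T\eta|$ contribution directly via the upper bound on $|\A|$. For the third summand we use $T X_l u = X_l Tu$ (a consequence of \eqref{eq:comm}), integrate by parts in $X_l$, and apply the chain-rule identity \eqref{eq:conn} to $X_l\A_{n+l}(\X u)$; this converts it into expressions of the form $\eta^2 G(|\X u|)^{\gamma+1}\bigl(D_j\A_{n+l} X_j X_l u + D_{n+l}\A_{n+l} Tu\bigr)Tu$ plus additional pieces involving $|\X\eta|$, none of which involves $\X Tu$. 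The middle summand $\eta^2 T[G(|\X u|)^{\gamma+1}] X_l u$ contains $T|\X u|$ and is handled analogously: after writing $T|\X u| = X_k u\, X_k Tu/|\X u|$, integration by parts in $X_k$ removes the derivative from $Tu$.

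Careful bookkeeping of the $(\gamma+1)$-factors is what produces the final power $(\gamma+1)^4$. The critical mechanism appears in the middle summand of $T\varphi$: the weight derivative $T[G(|\X u|)^{\gamma+1}] = (\gamma+1) G(|\X u|)^\gamma g(|\X u|) T|\X u|$ carries a factor of $(\gamma+1)$, and when the subsequent $X_k$-integration by parts falls on $G(|\X u|)^\gamma$ it produces an additional factor of $\gamma$, generating a cross-term with coefficient $(\gamma+1)\gamma\leq (\gamma+1)^2$ that is linear in both $|\XX u|$ and $|Tu|$. One application of Young's inequality then squares this coefficient, yielding the $(\gamma+1)^4$ factor in front of $\int\eta^2 G(|\X u|)^{\gamma+1}\weight|Tu|^2\dx$; all other cross-terms give at most $(\gamma+1)^2$. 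Choosing every Young parameter as a small positive constant independent of $\gamma$ to absorb each $|\XX u|^2$ contribution into the left-hand side, and using Lemma \ref{lem:gandG prop} to re-express every mixed combination of $G$, $g$ and $\weight$ uniformly in terms of $G(|\X u|)^{\gamma+1}\weight$ and $G(|\X u|)^{\gamma+1}|\X u|^2\weight$, we arrive at the claimed inequality with a constant depending only on $n$, $g_0$ and $L$.
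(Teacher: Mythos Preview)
Your proposal is correct and follows essentially the same route as the paper: the same test functions $\varphi_l=\eta^2 G(|\X u|)^{\gamma+1}X_lu$ are inserted into \eqref{eq:Xu} and \eqref{eq:Xu other}, the weight-derivative contribution on the left is discarded by positive-definiteness of $D\A$, and the $T\varphi$-piece is handled by two successive integrations by parts (first moving $T$ off $\A_{n+l}$, then moving $X_l$ and $X_k$ off $Tu$), producing cross-terms of order $(\gamma+1)^2$ that become $(\gamma+1)^4$ after Young's inequality. The paper organizes the right-hand side as $J_{1,l}+J_{2,l}+J_{3,l}$ rather than your ``four groups'', but the substance and the bookkeeping of the $(\gamma+1)$-powers are identical.
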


\begin{proof}
We fix $ l\in\{1,\ldots,n\}$ and $\eta\in C^\infty_0(\Omega)$. Now, we choose 
$\varphi_l =  \eta^2G(|\X u|)^{\gamma+1}X_lu $ as a test function  
in \eqref{eq:Xu} and obtain the following,
\begin{equation}\label{eq:xu1}
\begin{aligned}
\sum_{i,j=1}^{2n}&\int_{\Omega}\eta^{2} G(|\X u|)^{\gamma+1} 
D_j\A_i(\X u)X_jX_luX_iX_lu\dx\\
& +\ (\gamma+1)\sum_{i,j=1}^{2n}\int_\Omega \eta^{2} G(|\X u|)^{\gamma} 
 X_lu D_j\A_i(\X u)X_jX_lu X_i(G(|\X u|))\dx\\
&\ \ =  -2\sum_{i,j=1}^{2n}\int_{\Omega}\eta\,G(|\X u|)^{\gamma+1} X_luD_j\A_i(\X u)X_jX_luX_i\eta\dx\\
&\qquad -\sum_{i=1}^{2n}\int_{\Omega}D_i\A_{n+l}(\X u)X_i\varphi_l Tu\ dx\\
&\qquad  +\int_{\Omega} T(\A_{n+l}(\X u))\,\varphi_l\dx\\
&\ \ = J_{1,l}+ J_{2,l}+ J_{3,l}.
\end{aligned}
\end{equation}
Similarly, we choose $\varphi_{n+l} =  \eta^2G(|\X u|)^{\gamma+1}X_{n+l}u $ 
in \eqref{eq:Xu other} to get
\begin{equation}\label{eq:xu2}
\begin{aligned}
\sum_{i,j=1}^{2n}&\int_{\Omega}\eta^{2} G(|\X u|)^{\gamma+1} 
D_j\A_i(\X u)X_jX_{n+l}uX_iX_{n+l}u\dx\\
& +\ (\gamma+1)\sum_{i,j=1}^{2n}\int_\Omega \eta^{2} G(|\X u|)^{\gamma} 
 X_{n+l}u D_j\A_i(\X u)X_jX_{n+l}u X_i(G(|\X u|))\dx\\
&\ \ =  -2\sum_{i,j=1}^{2n}\int_{\Omega}\eta\,G(|\X u|)^{\gamma+1} 
X_{n+l}u D_j\A_i(\X u)X_jX_{n+l}uX_i\eta\dx\\
&\qquad +\sum_{i=1}^{2n}\int_{\Omega}D_i\A_{l}(\X u)X_i\varphi_{n+l} Tu\ dx\\
&\qquad  -\int_{\Omega} T(\A_{l}(\X u))\,\varphi_{n+l}\dx\\
&\ \ = J_{1,n+l}+ J_{2,n+l}+ J_{3,n+l}.
\end{aligned}
\end{equation} 
We shall add \eqref{eq:xu1} and \eqref{eq:xu2} and estimate both sides. First, 
notice that
$$ X_i(G(|\X u|)) = \frac{g(|\X u|)}{|\X u|} \sum_{k=1}^{2n}X_ku X_iX_k u .$$ 
We shall use the above along with \eqref{eq:gG2}. Adding \eqref{eq:xu1} and \eqref{eq:xu2} and using the structure condition \eqref{eq:str cond diff}, we  obtain that
\begin{equation}\label{eq:J0}
 \sum_{l=1}^{2n} (J_{1,l}+ J_{2,l}+ J_{3,l})\, \geq \int_\Om \eta^2 
 G(|\X u|)^{\gamma+1}\weight |\XX u|^2\dx
\end{equation}
Now we claim the following, which combined with \eqref{eq:J0} concludes the proof of the 
lemma. $ \textbf{Claim\,:}$ For every $ k \in \{1,2,3\},l \in \{1,\ldots,2n\}$ and some $c = c(n,g_0,L)>0$, we have  
\begin{equation}\label{eq:claim1}
\begin{aligned}
|J_{k,l}| \leq \frac{1}{12n}\int_{\Om}&\eta^2G(|\X u|)^{\gamma+1}\weight|\XX u|^2 \dx\ \\
&+\, c\int_\Om G(|\X u|)^{\gamma+1}|\X u|^2\weight\left(| \X \eta|^2+|\eta T\eta|\right) \dx\\
&+\, c\,(\gamma+1)^4\int_\Om\eta^2G(|\X u|)^{\gamma+1}\weight| Tu|^2\dx.
\end{aligned} 
\end{equation}
We prove the claim by estimating each $J_{k,l}$ in \eqref{eq:xu1} and \eqref{eq:xu2}, using \eqref{eq:str cond diff}. 

For the first term, we obtain 
\begin{equation*}
 |J_{1,l}| \leq c\int_\Om |\eta|G(|\X u|)^{\gamma+1}|\X u|\F(|\X u|)|\XX u||\X \eta|\dx
\end{equation*}
and the claim \eqref{eq:claim1} for $J_{1,l}$, follows from Young's inequality. 

We calculate $\X\varphi_l$ and 
similary estimate the second term using \eqref{eq:str cond diff}, to get 
\begin{equation}\label{eq:j2}
\begin{aligned}
|J_{2,l}|\leq c\int_{\Omega}\eta^2 &G(|\X u|)^{\gamma+1}\weight 
|Tu||\XX u| \dx\\
&+\, c\,(\gamma+1)\int_\Om \eta^2 G(|\X u|)^{\gamma}g(|\X u|)|\X u|\F(|\X u|) |Tu||\XX u|\dx \\
&+\, c\int_\Om |\eta| G(|\X u|)^{\gamma+1}|\X u|\F(|\X u|)|\X\eta||Tu|\dx.
\end{aligned}
\end{equation}
Recalling $tg(t)\leq (1+g_0)G(t)$ from \eqref{eq:gG2}, note that the second term of the right hand side of \eqref{eq:j2} can be replaced by 
the first term. Then the claim \eqref{eq:claim1} for $J_{2,l}$, follows by applying Young's inequality on each terms of the above. 

For the third term, we show the estimate only for \eqref{eq:xu1} i.e. for 
$l\in\{1,\ldots,n\}$, since the estimate for the other case is the same.
We first use integral by parts, then we calculate $T\varphi_l$ and obtain 
the following;
\begin{equation*}
\begin{aligned}
J_{3,l} 
=  -\int_{\Omega} &\eta^2G(|\X u|)^{\gamma+1}\A_{n+l}(\X u)X_l(Tu)\dx\\ 
&- (\gamma+1)\int_\Om \eta^2 G(|\X u|)^{\gamma}X_lu\,\A_{n+l}(\X u)T(G(|\X u|))\dx \\
&-\ 2\int_\Om \eta\, G(|\X u|)^{\gamma+1}X_lu\,\A_{n+l}(\X u)T\eta\dx.
\end{aligned}
\end{equation*}
Now, notice that $$ T(G(|\X u|)) = \frac{g(|\X u|)}{|\X u|} \sum_{k=1}^{2n}X_ku 
X_k(Tu)=\weight \sum_{k=1}^{2n}X_ku 
X_k(Tu) .$$
Using this, we carry out integral by parts 
again, for the 
first two terms of $J_{3,l}$ and obtain 
\begin{equation*}
\begin{aligned}
J_{3,l}=\int_{\Omega} &X_l\Big(\eta^2G(|\X u|)^{\gamma+1}\A_{n+l}(\X u)\Big)Tu\dx\\ 
&- (\gamma+1)\int_\Om \sum_{k=1}^{2n}X_k\Big(\eta^2 G(|\X u|)^{\gamma}
\F(|\X u|)X_lu\,\A_{n+l}(\X u)X_k u\Big) Tu\dx \\
&- 2\int_\Om \eta\, G(|\X u|)^{\gamma+1}X_lu\,\A_{n+l}(\X u)T\eta\dx.
\end{aligned}
\end{equation*}
From standard calculations and structure condition 
\eqref{eq:str cond diff}, we get 
\begin{equation}\label{eq:j3}
\begin{aligned}
|J_{3,l}|\leq c\,(&\gamma+1)^2\int_{\Omega}\eta^2 G(|\X u|)^{\gamma+1}\weight 
|Tu||\XX u| \dx\\
&+\,c\,(\gamma+1)^2\int_\Om \eta^2 G(|\X u|)^{\gamma}g(|\X u|)|\X u|\F(|\X u|) |Tu||\XX u|\dx\\
&+\, c\,(\gamma+1)\int_\Om |\eta| G(|\X u|)^{\gamma+1}|\X u|\F(|\X u|)|\X\eta||Tu|\dx\\
&+\, c\int_\Om |\eta|G(|\X u|)^{\gamma+1}|\X u|^2\weight |T\eta| \dx.
\end{aligned}
\end{equation}
Similarly as the estimate of $J_{2,l}$ in \eqref{eq:j2}, we use \eqref{eq:gG2} to combine the first two terms of the right hand side of \eqref{eq:j3}. Then, by applying Young's inequality on all terms except the last one, the claim \eqref{eq:claim1} for 
$J_{3,l}$ follows. Thus, the proof is finished. 
\end{proof}
\subsection{A Reverse type inequality}\label{subsec:Reverse type inequality}\noindent
\\
We follow the technique of Zhong \cite{Zhong} and obtain a reverse type 
inequality for $Tu$ in the following lemma. 
This shall be crucial for obtaining estimates for horizontal and vertical derivatives, later. The following lemma is reminiscent to Lemma 3.5 in \cite{Zhong}.
\begin{Lem}\label{lem:Rev}
For any $\gamma\geq 1$ and all non-negative $\eta\in C^\infty_0(\Omega)$, we have
\begin{equation}\label{eq:Rev}
\begin{aligned}
\int_\Omega \eta^{2} G(\eta &|Tu|)^{\gamma+1} \weight|\XX u|^2\dx\\
\leq c\,&(\gamma+1)^2 
\|\X\eta\|_{L^\infty}^2\int_\Omega G(\eta |Tu|)^{\gamma+1} |Tu|^{-2}
|\X u|^2\weight  | \XX u|^2\dx
\end{aligned}
\end{equation}
for some $c=c(n,g_0,L)>0$. 
\end{Lem}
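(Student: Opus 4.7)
The plan is to follow the strategy of Lemma 3.5 in \cite{Zhong}: test the weak equations for the horizontal derivatives $X_l u$ and $X_{n+l}u$ from Lemma \ref{lem:Xu} with cutoff functions whose weight encodes $G(\eta|Tu|)^{\gamma+1}$, and then invoke the commutator bound $|Tf|\leq 2|\XX f|$ from \eqref{eq:T-D} at the end to produce the reverse structure.

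Concretely, for each $l \in \{1, \ldots, n\}$ I would use test functions of the form
$$\varphi_l = \eta^{2}\, G(\eta|Tu|)^{\gamma+1}\, X_l u, \qquad \varphi_{n+l} = \eta^{2}\, G(\eta|Tu|)^{\gamma+1}\, X_{n+l} u$$
in \eqref{eq:Xu} and \eqref{eq:Xu other}, sum over $l$, and exploit the ellipticity in \eqref{eq:str cond diff}. The principal (Hessian--Hessian) pairing yields
$$\int_{\Om} \eta^{2}\, G(\eta|Tu|)^{\gamma+1}\, \F(|\X u|)\, |\XX u|^{2}\, dx,$$
matching, up to universal constants, the LHS of \eqref{eq:Rev}. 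The remaining terms must then be bounded by the RHS.

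The crucial step is the differentiation of the weight. By the chain rule, $X_{i}(G(\eta|Tu|)^{\gamma+1})$ produces a factor $(\gamma+1)$ together with $X_{i}(\eta|Tu|) = |Tu|\,X_{i}\eta + \eta\,\mathrm{sgn}(Tu)\,T(X_{i}u)$, where I used $X_i(Tu)=T(X_iu)$ from \eqref{eq:comm}. Terms carrying $|Tu|\, X_i\eta$ are benign, supplying exactly the $\|\X\eta\|_{L^{\infty}}$ dependence desired. Terms carrying $T(X_{i}u)$ are ostensibly third-order; I would control them using $|T(X_{i}u)|\leq 2|\XX(X_{i}u)|$ from \eqref{eq:T-D} together with integration by parts in the $T$-direction, or, equivalently, by folding in the Caccioppoli inequality \eqref{eq:Tu} for $Tu$ via Lemma \ref{lem:cacci T}. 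The analogous treatment applies to the $T(\A_{n+l}(\X u))$ and $T(\A_l(\X u))$ contributions appearing in \eqref{eq:Xu}--\eqref{eq:Xu other}. After repeated Young's-inequality absorptions of $|\XX u|^{2}$ into the LHS, the surviving terms are of the form
$$C(\gamma+1)^{2} \int_{\Om} G(\eta|Tu|)^{\gamma+1}\, \F(|\X u|)\, |\X u|^{2}\, |\X\eta|^{2}\, dx.$$
To convert this into the exact form of the RHS of \eqref{eq:Rev}, I replace $|\X\eta|^{2}$ by $\|\X\eta\|_{L^{\infty}}^{2}$ and multiply the integrand by the trivial bound $1\leq 4|\XX u|^{2}/|Tu|^{2}$, which is a direct consequence of \eqref{eq:T-D}.

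The main obstacle is the clean disposal of the $T(X_{i}u)$-terms without either losing the $\|\X\eta\|_{L^{\infty}}^{2}$ dependence or inflating the polynomial factor in $(\gamma+1)$ beyond $(\gamma+1)^{2}$. This is where the noncommutative algebra $[X_{l},X_{n+l}]=T$ must be handled very carefully: the structural cancellations between the $l$ and $n+l$ test functions, and the correct pairing of $X$-integration by parts with the commutator relation, are exactly what keeps only the quadratic $(\gamma+1)^2$ in the end. The overall organization mirrors \cite{Zhong}, but with the monomial $|\X u|^{p-2}$ systematically replaced by the Young function $\F(|\X u|)$ and the powers of $|Tu|$ absorbed into $G(\eta|Tu|)^{\gamma+1}$.
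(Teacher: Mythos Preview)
Your overall plan—testing with $\varphi_l=\eta^2G(\eta|Tu|)^{\gamma+1}X_lu$ and appealing to a Caccioppoli inequality for $Tu$—is the same route the paper takes. However, two points in your handling of the $\X(Tu)$ terms need correction.

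First, the bound $|T(X_iu)|\leq 2|\XX(X_iu)|$ introduces \emph{third} horizontal derivatives of $u$, for which no estimate is available at this stage; that option is a dead end, and integration by parts in $T$ by itself only relocates the problem. The workable mechanism is indeed a variant of Lemma~\ref{lem:cacci T}, but with the weight $G(\eta|Tu|)^{\gamma+1}$ (not $G(|Tu|)^{\gamma+1}$) and an extra power of $\eta$ on the left:
\[
\int_\Omega \eta^4 G(\eta|Tu|)^{\gamma+1}\F(|\X u|)\,|\X(Tu)|^2\,dx \ \le\ c\int_\Omega \eta^2 G(\eta|Tu|)^{\gamma+1}\F(|\X u|)\,|Tu|^2\,|\X\eta|^2\,dx.
\]
The paper states and uses exactly this auxiliary inequality.

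Second, the absorption is not as automatic as you indicate. After applying the inequality above and then $|Tu|\le 2|\XX u|$, the would-be absorbable term carries an extra factor $|\X\eta|^2$, so a plain Young split yields $c\,\|\X\eta\|_{L^\infty}^2\cdot(\text{LHS})$, which cannot be absorbed. The paper's device is to weight Young's inequality asymmetrically with parameters $\tau/\|\X\eta\|_{L^\infty}^2$ and $c\|\X\eta\|_{L^\infty}^2/\tau$: the first cancels the $|\X\eta|^2$ produced by the Caccioppoli step, leaving a clean $c\tau\cdot(\text{LHS})$, while the second deposits the factor $\|\X\eta\|_{L^\infty}^2$ on the right-hand side of \eqref{eq:Rev}. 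This weighted split is precisely what keeps both the $\|\X\eta\|_{L^\infty}^2$ dependence and the $(\gamma+1)^2$ power under control—the obstacle you flag but do not actually resolve. (Incidentally, there are no structural cancellations between the $l$ and $n+l$ contributions; one simply sums over $l$ to assemble $|\XX u|^2$ on the left.)
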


\begin{proof}
 First, notice that from \eqref{eq:gG2}, we have   
 $G(\eta |Tu|)^{\gamma+1} |Tu|^{-2} \leq \eta^2G(\eta|Tu|)^{\gamma-1}
 g(\eta|Tu|)^2$ for every $\gamma\geq 1$. In other words, the integral in right hand side of \eqref{eq:Rev}, is not singular. 
 
To prove the lemma, we fix $l\in\{1,\ldots,n\}$ and invoke \eqref{eq:weak}, 
i.e. for any $\varphi \in C^\infty_0(\Omega)$ 
$$ \int_\Om \sum_{i=1}^{2n} X_l (\A_i(\X u)X_i\varphi)dx= \int_\Om T(\A_{n+l}(\X u))\varphi\dx.$$
 We choose the test function $\varphi = 
 \eta^2G(\eta |Tu|)^{\gamma+1} X_lu $ in the above. Notice that 
 \begin{align*}
 X_i\varphi =  \eta^2G(\eta |Tu|)^{\gamma+1} &X_iX_lu
 + (\gamma+1)\eta^3 G(\eta |Tu|)^{\gamma}g(\eta |Tu|)X_lu\,X_i(|Tu|)\\
 + &\Big(2\eta G(\eta |Tu|)^{\gamma+1}
 +(\gamma+1)\eta^2 G(\eta |Tu|)^{\gamma}g(\eta |Tu|)|Tu|\Big) X_lu X_i\eta
 \end{align*}
  and from \eqref{eq:comm}, recall that $X_{n+l}X_l = X_lX_{n+l}-T$ . Using these, we obtain
 \begin{equation*}
\begin{aligned}
\sum_{i=1}^{2n}\int_{\Omega}&\eta^{2} G(\eta |Tu|)^{\gamma+1}
X_l(\A_i(\X u))X_lX_iu\,dx\\
&=  \int_\Omega \eta^{2} G(\eta |Tu|)^{\gamma+1} X_l(\A_{n+l}(\X u)) Tu\dx\\
&\quad - (\gamma+1)\sum_{i=1}^{2n}\int_\Omega \eta^3 G(\eta |Tu|)^{\gamma}g(\eta |Tu|)X_luX_l(\A_i(\X u))X_i(|Tu|)\, dx\\
&\quad - \sum_{i=1}^{2n}\int_{\Omega}\Big(2\eta G(\eta|Tu|)+(\gamma+1)\eta^2 g(\eta|Tu|)|Tu|\Big) G(\eta|Tu|)^{\gamma}  X_lu
X_l(\A_i(\X u))X_i\eta\, dx\\
&\quad + \int_{\Omega} \eta^{2}G(\eta |Tu|)^{\gamma+1}  X_lu\, T(\A_{n+l}(\X u)) \, dx\\
& = I_1+I_2+I_3+I_4.
\end{aligned}
\end{equation*}
We shall estimate both sides of the above. To estimate the left hand side, we use 
the structure condition 
\eqref{eq:str cond diff}, to obtain 
\begin{equation*}
\sum_{i=1}^{2n}\int_{\Omega}\eta^{2} G(\eta |Tu|)^{\gamma+1}
X_l(\A_i(\X u))X_lX_iu\,dx
\geq \int_\Omega \eta^{2} G(\eta |Tu|)^{\gamma+1} \weight|X_l(\X u)|^2\, dx. 
\end{equation*}
For the right hand side, we claim the following 
for every $k\in \{1,2,3,4\}$,
\begin{equation}\label{eq:claim2}
 \begin{aligned}
 |I_k|\leq c\tau \int_\Omega \eta^{2} &G(\eta |Tu|)^{\gamma+1} \weight|\XX u|^2\, dx\\
 &+\frac{c}{\tau}\,(\gamma+1)^2 \|\X\eta\|_{L^\infty}^2\int_\Omega G(\eta |Tu|)^{\gamma+1} |Tu|^{-2}
|\X u|^2\weight  | \XX u|^2\, dx
 \end{aligned}
\end{equation}
for some $c = c(n,g_0,L)>0$, where $\tau>0$ is any arbitrary constant. Assuming 
the claim and 
combining it with the previous estimate, we end up with 
\begin{align*}
 \int_\Omega \eta^{2} G(\eta |Tu|)^{\gamma+1} &\weight|X_l(\X u)|^2\, dx
 \leq \tau \int_\Omega \eta^{2} G(\eta |Tu|)^{\gamma+1} 
 \weight|\XX u|^2\, dx\\
 &+\frac{c}{\tau}\,(\gamma+1)^2 \|\X\eta\|_{L^\infty}^2\int_\Omega G(\eta |Tu|)^{\gamma+1} |Tu|^{-2}
|\X u|^2\weight  | \XX u|^2\, dx
\end{align*}
for some $c = c(n,g_0,L)>0$ and every $l\in\{1,\ldots,n\}$. Similarly, the above inequality can also be obtained when 
$l\in\{n,\ldots,2n\}$. Then, by summing over the two inequalities and choosing 
$\tau>0$ small enough, it is easy to obtain \eqref{eq:Rev}, as required to 
complete the proof.

Thus, we are left with proving the claim \eqref{eq:claim2}, which we 
accomplish by estimating each $I_k$, one by one.  
For $I_1$, first we use integral by parts to get 
\begin{equation*}
\begin{aligned}
 I_1 &= \ -\int_\Omega X_l\Big(\eta^2G(\eta |Tu|)^{\gamma+1}  Tu\Big) \A_{n+l}(\X u)\,dx \\
   &=-\int_\Omega \eta^2G(\eta|Tu|)^\gamma \Big[ G(\eta|Tu|) + (\gamma+1)\eta |Tu|g(\eta|Tu|)\Big] \A_{n+l}(\X u)X_l(Tu)\,dx\\
   &\qquad\ - \int_\Omega \eta\, G(\eta|Tu|)^\gamma \Big[2G(\eta|Tu|)+(\gamma+1)\eta|Tu|g(\eta|Tu|)\Big] Tu\, \A_{n+l}(\X u)X_l\eta\,dx \\
   &= I_{11} + I_{12}. 
\end{aligned}
\end{equation*}
Recall that $ tg(t) \leq (1+g_0)G(t)$ for all $t>0$ from \eqref{eq:gG2}. Using this 
along with the structure condition \eqref{eq:str cond diff}, we will show that the claim \eqref{eq:claim2} holds for both $I_{11}$ and $ I_{12}$. 

For $I_{11}$, using \eqref{eq:gG2},\eqref{eq:str cond diff} and 
Young's inequality, we obtain
\begin{equation}\label{eq:I11}
\begin{aligned}
|I_{11}|\leq\ &c\,(\gamma+1)\int_\Omega \eta^2G(\eta |Tu|)^{\gamma+1} |\X u|\weight|\X(Tu)|\,dx\\
\leq\ &\frac{\tau}{\|\X\eta\|^2_{L^\infty}}\int_\Omega \eta^{4} G(\eta |Tu|)^{\gamma+1} \weight|\X(Tu)|^2\, dx\\
 &+ \frac{c}{\tau}\,(\gamma+1)^2\|\X\eta\|^2_{L^\infty}\int_\Omega  G(\eta |Tu|)^{\gamma+1} |\X u|^2\weight\, dx\\
\end{aligned}
\end{equation}
Now, the following inequality can be proved in a way similar to that of the Caccioppoli type inequality of $Tu$ in Lemma \ref{lem:cacci T}, with minor 
modifications, 
\begin{equation*}
\begin{aligned}
\int_\Omega \eta^4G(\eta |Tu|)^{\gamma+1} \weight |\X(Tu)|^2\,dx 
\leq c\int_\Omega \eta^2G(\eta |Tu|)^{\gamma+1} \weight| Tu|^2|\X\eta|^2\, dx,
\end{aligned}
\end{equation*}
for some $c=c(n,g_0,L)>0$. After using the above inequality for the first term of 
\eqref{eq:I11} and then using $|Tu|\leq 2|\XX u|$ for both terms, it is easy to see that \eqref{eq:claim2}
holds for $I_{11}$. 

For $I_{12}$, using structure condition \eqref{eq:str cond diff} and 
\eqref{eq:gG2} again, we get 
\begin{equation}\label{eq:I12}
|I_{12}|\leq c\,(\gamma+1)\int_\Omega |\eta|G(\eta |Tu|)^{\gamma+1} |\X u|
   \weight|Tu||\X\eta|\,dx
\end{equation}
from which, \eqref{eq:claim2} follows easily from Young's inequality and 
$|Tu|\leq 2|\XX u|$. Thus, combining the estimates \eqref{eq:I11} and 
\eqref{eq:I12}, we conclude that the claim \eqref{eq:claim2}, holds for $I_1$.

The estimate of $I_2$ is similar. We use 
 \eqref{eq:str cond diff}, 
\eqref{eq:gG2} and 
Young's inequality, to get
\begin{equation*}
\begin{aligned}
 |I_2| &\leq c\,(\gamma+1)\int_\Omega \eta^2G(\eta |Tu|)^{\gamma+1} 
 |Tu|^{-1}|\X u|\weight|\XX u||\X(Tu)|\,dx\\
 &\leq \frac{\tau}{\|\X\eta\|^2_{L^\infty}}\,\int_\Omega \eta^{4} 
 G(\eta |Tu|)^{\gamma+1} \weight|\X (Tu)|^2\, dx\\
 &\qquad + \frac{c}{\tau}\,(\gamma+1)^2\|\X\eta\|^2_{L^\infty}\int_\Omega  
 G(\eta |Tu|)^{\gamma+1} 
 |Tu|^{-2}|\X u|^2\weight|\XX u|^2\, dx.
\end{aligned}
\end{equation*}
Notice that, the first term on the right hand side of the latter inequality of the above, is identical to that of \eqref{eq:I11}. Hence, the claim 
\eqref{eq:claim2} for $I_2$, follows similarly. 

For $I_3$, using \eqref{eq:gG2} and structure condition \eqref{eq:str cond diff} again, we obtain
\begin{equation*}
\begin{aligned}
 |I_3| \leq  c\,(\gamma+1)\int_\Omega |\eta|\,G(\eta |Tu|)^{\gamma+1} 
 |\X u|\weight|\XX u||\X\eta|\,dx
\end{aligned}
\end{equation*}
which together with Young's inequality, is enough for claim \eqref{eq:claim2}.
Finally, the 
fourth term has the following estimate.
\begin{equation*}
\begin{aligned}
 |I_4| = \Big|\int_\Omega \eta^{2} &G(\eta |Tu|)^{\gamma+1}  X_lu
\sum_{i=1}^{2n}D_i \A_{n+l}(\X u)X_i(Tu)\,dx\Big|\\
 \leq &\int_\Omega \eta^2G(\eta |Tu|)^{\gamma+1} |\X u|\weight|\X(Tu)|\,dx,
\end{aligned}
\end{equation*}
which is identical to the upper bound of $I_{11}$ in \eqref{eq:I11}. Hence, 
the claim \eqref{eq:claim2} holds for $I_4$ as well and the proof is complete.
\end{proof}
The inequality \eqref{eq:Rev} of the above lemma yields the 
following intermediate inequality, which shall be essential for proving the 
final estimate for the horizontal gradient. 
\begin{Cor}\label{cor:Rev'}
 For any $\gamma\geq 1$ and all non-negative $\eta\in C^\infty_0(\Omega)$, we have
\begin{equation}\label{eq:Rev'}
\begin{aligned}
\int_\Omega \eta^{2} &G(\eta |Tu|)^{\gamma+1} \weight|\XX u|^2\, dx \\
&\leq c^\frac{\gamma+1}{2}(\gamma+1)^{(\gamma+1)(1+g_0)}\int_\Omega 
\eta^2 G\big(\|\X\eta\|_{L^\infty}|\X u|\big)^{\gamma+1}\weight|\XX u|^2 \dx
\end{aligned}
\end{equation}
where $c = c(n,g_0,L)>0$. 
\end{Cor}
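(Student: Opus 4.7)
The strategy is to convert the right-hand side of Lemma \ref{lem:Rev}, which contains the singular factor $|Tu|^{-2}|\X u|^{2}$, into the target expression $G(\|\X\eta\|_{L^{\infty}}|\X u|)^{\gamma+1}$ by a pointwise case-split combined with Young's inequality, then absorb a small leftover term into the left-hand side. Writing $\alpha = \eta |Tu|$ and $\beta = \|\X\eta\|_{L^{\infty}}|\X u|$, one checks that
\[
\|\X\eta\|_{L^{\infty}}^{2}\,G(\alpha)^{\gamma+1}\,|Tu|^{-2}|\X u|^{2} \,=\, \eta^{2}\,G(\alpha)^{\gamma+1}\,(\beta/\alpha)^{2},
\]
so the task reduces to a pointwise bound of the schematic form
\[
G(\alpha)^{\gamma+1}(\beta/\alpha)^{2} \,\leq\, \epsilon\, G(\alpha)^{\gamma+1} + C(\epsilon,\gamma,g_{0})\,G(\beta)^{\gamma+1},
\]
with $\epsilon>0$ eventually chosen to cancel the $(\gamma+1)^{2}$ factor coming from Lemma \ref{lem:Rev}.

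The preliminary observation is that for $\gamma \geq 1$ the auxiliary function $H(t) := G(t)^{(\gamma+1)/2}/t$ is non-decreasing on $(0,\infty)$. A direct calculation together with the inequality $G(t) \leq tg(t)$ from \eqref{eq:gG2} yields
\[
t^{2} H'(t) \,=\, G(t)^{(\gamma-1)/2}\!\left[\tfrac{\gamma+1}{2}\,tg(t) - G(t)\right] \,\geq\, G(t)^{(\gamma-1)/2}\!\left[\tfrac{\gamma+1}{2} - 1\right] G(t) \,\geq\, 0,
\]
which is precisely where the hypothesis $\gamma \geq 1$ enters. I then split the domain into $E = \{\alpha < \beta\}$ and its complement. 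On $E$, the monotonicity of $H$ gives $G(\alpha)^{(\gamma+1)/2}\beta/\alpha \leq G(\beta)^{(\gamma+1)/2}$, which squares to $G(\alpha)^{\gamma+1}(\beta/\alpha)^{2} \leq G(\beta)^{\gamma+1}$, so no Young inequality is needed there. On $E^{c} = \{\alpha \geq \beta\}$, combining \eqref{eq:gG2} and \eqref{eq:gG3} gives the doubling estimate $G(\alpha) \leq (1+g_{0})(\alpha/\beta)^{1+g_{0}} G(\beta)$, hence
\[
G(\alpha)^{\gamma+1}(\beta/\alpha)^{2} \,\leq\, (1+g_{0})^{2/(1+g_{0})}\, G(\alpha)^{\gamma+1 - 2/(1+g_{0})}\, G(\beta)^{2/(1+g_{0})},
\]
and a weighted Young inequality with conjugate exponents $p = (\gamma+1)(1+g_{0})/[(\gamma+1)(1+g_{0}) - 2]$ and $q = (\gamma+1)(1+g_{0})/2$ produces the required splitting with $C(\epsilon,\gamma,g_{0}) \lesssim \epsilon^{1-q}$, up to factors depending only on $g_{0}$.

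Plugging the two pointwise bounds into Lemma \ref{lem:Rev} and choosing $\epsilon = 1/[2c(\gamma+1)^{2}]$ allows one to absorb the piece $\int \eta^{2} G(\alpha)^{\gamma+1}\F(|\X u|)|\XX u|^{2}\,dx$ on the right into the left, leaving a constant of the order
\[
(\gamma+1)^{2}\cdot\epsilon^{1-q} \,\lesssim\, c^{(\gamma+1)(1+g_{0})/2 - 1}\,(\gamma+1)^{(\gamma+1)(1+g_{0})},
\]
which, after absorbing $g_{0}$-dependent factors into a renamed constant $c$, matches the stated $c^{(\gamma+1)/2}(\gamma+1)^{(\gamma+1)(1+g_{0})}$ in \eqref{eq:Rev'}. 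The main obstacle is the careful bookkeeping of the $\gamma$-dependence through Young's inequality to ensure the exponent $(\gamma+1)(1+g_{0})$ appears exactly as stated (and with no spurious $\delta$-dependence); once the monotonicity of $H$ is in place, the case $\{\alpha < \beta\}$ is essentially free, and the only real analytic work lives in the large-ratio regime $\{\alpha \geq \beta\}$.
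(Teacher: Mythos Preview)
Your argument is correct, but it proceeds by a genuinely different route than the paper's. The paper packages the right-hand side of Lemma~\ref{lem:Rev} through the auxiliary $N$-function $\Psi(s)=\tau G(\sqrt{s})^{\gamma+1}$, applies the Orlicz Young inequality \eqref{eq:YI} for the complementary pair $(\Psi,\Psi^*)$, and then invokes the abstract estimate $\Psi^*(\Psi(t)/t)\le\Psi(t)$ from Lemma~\ref{lem:comp prop} to absorb; the doubling of $G$ at the end produces the factor $(\gamma+1)^{(\gamma+1)(1+g_0)}$. You instead work pointwise with $\alpha=\eta|Tu|$, $\beta=\|\X\eta\|_{L^\infty}|\X u|$, split into $\{\alpha<\beta\}$ and $\{\alpha\ge\beta\}$, use the monotonicity of $H(t)=G(t)^{(\gamma+1)/2}/t$ in the first regime, and the classical power-Young inequality with exponents $p,q$ satisfying $q=(\gamma+1)(1+g_0)/2$ in the second. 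Both approaches land on the same $(\gamma+1)^{(\gamma+1)(1+g_0)}$ and a $c^{(\gamma+1)/2}$-type constant after renaming (your $c^{(\gamma+1)(1+g_0)/2}$ becomes $\tilde c^{(\gamma+1)/2}$ with $\tilde c=c^{1+g_0}$, which still depends only on $n,g_0,L$). The paper's argument is shorter because the Orlicz machinery of Section~\ref{sec:Preliminaries} has already been set up; your argument is more elementary, bypasses conjugate $N$-functions entirely, and has the merit of making the role of the hypothesis $\gamma\ge1$ completely explicit through $H'\ge0$.
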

\begin{proof}
Let us denote $\Psi(s) = \tau G(\sqrt{s})^{\gamma+1}$, where $\tau>0$ is an 
arbitrary constant. Notice that $\Psi$ is a N-function if $\gamma\geq 1$. Now 
we restate the inequality \eqref{eq:Rev} of Lemma \ref{lem:Rev}, as 
\begin{equation}\label{eq:Rev1}
\begin{aligned}
\int_\Omega \eta^{2} G(\eta |Tu|&)^{\gamma+1} \weight|\XX u|^2\dx\\
\leq \frac{c}{\tau}\,&(\gamma+1)^2 
\|\X\eta\|_{L^\infty}^2\int_\Omega 
\frac{\Psi(\eta^2|Tu|^2)}{|Tu|^2}
|\X u|^2\weight  | \XX u|^2\dx.
\end{aligned}
\end{equation}
Taking $\Psi^*$ as the conjugate function of $\Psi$, we apply the Young's inequality \eqref{eq:YI} on the right hand side 
of the above to get 
\begin{equation}\label{eq:YIapp}
\begin{aligned}
\frac{c}{\tau}\,(\gamma+1)^2 
&\|\X\eta\|_{L^\infty}^2\int_\Omega 
\frac{\Psi(\eta^2|Tu|^2)}{|Tu|^2}
|\X u|^2\weight  | \XX u|^2\dx\\
 &\leq \int_\Omega \eta^2\,
\Psi^*\bigg(\frac{\Psi(\eta^2|Tu|^2)}{\eta^2|Tu|^2}\bigg) 
\weight|\XX u|^2 \dx \\
&\qquad\quad +\int_\Omega \eta^2\,\Psi\Big(\frac{c}{\tau}(\gamma+1)^2 \|\X\eta\|_{L^\infty}^2|\X u|^2\Big)\weight|\XX u|^2 \dx .
\end{aligned}
\end{equation}
Recalling \eqref{eq:comp prop}, notice that  
$$ \Psi^*\bigg(\frac{\Psi(\eta^2|Tu|^2)}{\eta^2|Tu|^2}\bigg) 
\leq \Psi(\eta^2|Tu|^2) = \tau G(\eta|Tu|)^{\gamma+1} $$
and using this together with \eqref{eq:Rev1} and \eqref{eq:YIapp}, we end up with
 \begin{equation*}
\begin{aligned}
\int_\Omega \eta^{2} G(\eta &|Tu|)^{\gamma+1} \weight|\XX u|^2\dx\\
&\leq \tau\int_\Omega \eta^2\,G(\eta|Tu|)^{\gamma+1} 
\weight|\XX u|^2 \dx \\
&\qquad\quad +\int_\Omega \eta^2\,\tau G\Big(\sqrt{c/\tau}\,(\gamma+1) 
\|\X\eta\|_{L^\infty}|\X u|\Big)^{\gamma+1}\weight|\XX u|^2 \dx .
\end{aligned}
\end{equation*}
Thus, with a small enough 
$\tau>0$ and the doubling property of $G$, the proof is finished. 
\end{proof}
The inequality \eqref{eq:Rev'} is required in a slightly different form, which we state here in the following corollary. It is an easy consequence of Corollary \ref{cor:Rev'}, above.
\begin{Cor}\label{cor:Rev''}
For any $\gamma,\om\geq 1$ and all non-negative $\eta\in C^\infty_0(\Omega)$, 
we have 
\begin{equation}\label{eq:Rev''}
\begin{aligned}
\int_\Omega \eta^{2} &G\bigg(\frac{\eta|Tu|}{\sqrt{\om K_\eta}}\bigg)^{\gamma+1} \weight |\XX u|^2\dx\\
&\leq \frac{c^\frac{\gamma+1}{2}(\gamma+1)^{(\gamma+1)(1+g_0)}}{\om^{\frac{\gamma+1}{2}}}
\int_\Omega 
\eta^2 G\big(|\X u|\big)^{\gamma+1}\weight|\XX u|^2 \dx
\end{aligned}
\end{equation}
where $ K_\eta = \| \X \eta\|_{L^\infty(\Omega)}^2+\|\eta T\eta\|_{L^\infty(\Omega)} $ and 
$ c = c(n,g_0,L) >0$ is a constant.
\end{Cor}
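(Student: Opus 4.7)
The plan is to derive \eqref{eq:Rev''} from \eqref{eq:Rev'} by a simple rescaling of the test function. Since $K_\eta$ is a positive constant depending only on $\eta$, and $\om\geq 1$, the rescaled cutoff $\tilde\eta := \eta/\sqrt{\om K_\eta}$ is still a non-negative element of $C^\infty_0(\Om)$, with $\X\tilde\eta = \X\eta/\sqrt{\om K_\eta}$. I would apply Corollary \ref{cor:Rev'} to $\tilde\eta$ in place of $\eta$ and then multiply both sides of the resulting inequality by the positive constant $\om K_\eta$ to absorb the $\tilde\eta^2$ prefactor back into $\eta^2$, obtaining
\[
\int_\Om \eta^2\, G\bigg(\frac{\eta|Tu|}{\sqrt{\om K_\eta}}\bigg)^{\gamma+1}\weight\,|\XX u|^2\dx \leq c^{\frac{\gamma+1}{2}}(\gamma+1)^{(\gamma+1)(1+g_0)}\int_\Om \eta^2\, G\bigg(\frac{\|\X\eta\|_{L^\infty}|\X u|}{\sqrt{\om K_\eta}}\bigg)^{\gamma+1}\weight\,|\XX u|^2 \dx.
\]

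Next, I would exploit the definition of $K_\eta$: since $\|\X\eta\|_{L^\infty}^2\leq K_\eta$ trivially and $\om\geq 1$, the scalar $\alpha := \|\X\eta\|_{L^\infty}/\sqrt{\om K_\eta}$ satisfies $\alpha\leq 1/\sqrt{\om}\leq 1$. To pull this smallness out of the Young function, I would invoke the monotonicity of $G(t)/t$ from \eqref{eq:gG4}, which gives the sub-linearity $G(\alpha t)\leq \alpha\, G(t)$ for all $\alpha\in[0,1]$ and $t\geq 0$. Raising to the $(\gamma+1)$-st power yields
\[
G\bigg(\frac{\|\X\eta\|_{L^\infty}|\X u|}{\sqrt{\om K_\eta}}\bigg)^{\gamma+1} \leq \om^{-(\gamma+1)/2}\, G(|\X u|)^{\gamma+1},
\]
and substituting this into the previous display gives exactly \eqref{eq:Rev''}.

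I do not anticipate any real obstacle here: the argument is a linear rescaling of the cutoff combined with the elementary sub-linearity of $G$ on $[0,1]$. The only point worth flagging is why the statement uses $K_\eta$ rather than just $\|\X\eta\|_{L^\infty}^2$: only the $\|\X\eta\|_{L^\infty}^2$ piece of $K_\eta$ is used in this proof, but the $\|\eta T\eta\|_{L^\infty}$ contribution will be indispensable later when this inequality is combined with the Caccioppoli inequality for $\X u$ from Lemma \ref{lem:cacci Xu}, whose right-hand side involves precisely that term.
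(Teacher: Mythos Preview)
Your proposal is correct and follows essentially the same route as the paper: rescale the cutoff to $\eta/\sqrt{\om K_\eta}$, apply Corollary \ref{cor:Rev'}, and then use a scaling property of $G$ together with $\|\X\eta\|_{L^\infty}\leq\sqrt{K_\eta}$. The only cosmetic difference is that the paper bounds $G(t/\sqrt\om)\leq \frac{1+g_0}{\sqrt\om}G(t)$ via \eqref{eq:gG2} and monotonicity of $g$, whereas your use of \eqref{eq:gG4} to get $G(\alpha t)\leq\alpha G(t)$ for $\alpha\leq 1$ is slightly cleaner and avoids the harmless extra factor $(1+g_0)^{\gamma+1}$.
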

\begin{proof}
Given any $\om\geq 1$, note that from Lemma \ref{lem:gandG prop},
\begin{equation}\label{eq:inqom}
G\bigg(\frac{t}{\sqrt{\om}}\bigg) \leq  \frac{t}{\sqrt{\om}}g\bigg(\frac{t}{\sqrt{\om}}\bigg) \leq \frac{1+g_0}{\sqrt{\om}} G(t) .
\end{equation}
  Taking $ K_\eta = \| \X \eta\|_{L^\infty(\Omega)}^2+\|\eta T\eta\|_{L^\infty(\Omega)} $, we use $\eta/\sqrt{\om K_\eta}$ in place of $\eta$ in \eqref{eq:Rev'}, to get that 
\begin{equation*}
\begin{aligned}
\int_\Omega \frac{\eta^{2}}{\om K_\eta} &G\bigg(\frac{\eta|Tu|}{\sqrt{\om K_\eta}}\bigg)^{\gamma+1}
 \weight|\XX u|^2\, dx \\
&\leq c^\frac{\gamma+1}{2}(\gamma+1)^{(\gamma+1)(1+g_0)}\int_\Omega 
\frac{\eta^{2}}{\om K_\eta} G
\bigg(\frac{\|\X\eta\|_{L^\infty}|\X u|}{\sqrt {\om K_\eta}}\bigg)^{\gamma+1}\weight|\XX u|^2 \dx\\
&\leq \frac{c^\frac{\gamma+1}{2}(1+g_0)^{\gamma+1}(\gamma+1)^{(\gamma+1)(1+g_0)}}{\om^{\frac{\gamma+1}{2}}}\int_\Omega 
\frac{\eta^{2}}{\om K_\eta} G(|\X u|)^{\gamma+1}\weight|\XX u|^2 \dx.
\end{aligned}
\end{equation*} 
In the latter inequality of the above, we have used  
$\|\X\eta\|_{L^\infty} \leq  \sqrt{K_\eta}$, monotonicity of $G$ and the inequality 
\eqref{eq:inqom}. 
After removing the factor $1/ \om K_\eta$ from both sides of the above, we end up with \eqref{eq:Rev''} for some $c=c(n,g_0,L)>0$, to complete the proof. 
\end{proof}

\subsection{Horizontal and Vertical estimates}\label{subsec:Horizontal and Vertical estimates}\noindent
\\
We first show that, the Caccioppoli type 
inequality of Lemma \ref{lem:cacci Xu}, can be improved using Corollary \ref{cor:Rev''}. This would be essential for the proof of Theorem \ref{thm:mainthm}.
\begin{Prop}\label{prop:horizontal estimate} 
 If $ u\in HW^{1,G}(\Om) $ is a weak solution of equation \eqref{eq:maineq}, then 
 for any $\gamma\geq 1$ and all non-negative $\eta\in C^\infty_0(\Omega)$, we have 
 the following estimate
 \begin{equation*}
  \int_{\Omega}\eta^2G(|\X u|)^{\gamma+1}\weight|\XX u|^2\, dx\leq  
 c(\gamma+1)^{10(1+g_0)}K_\eta\int_{\supp(\eta)} G(|\X u|)^{\gamma+1}|\X u|^2\weight \dx,
 \end{equation*}
where $ K_\eta = \| \X \eta\|_{L^\infty(\Omega)}^2+\|\eta T\eta\|_{L^\infty(\Omega)} $ and 
$ c = c(n,g_0,L) >0$ is a constant.
\end{Prop}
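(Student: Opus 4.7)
The plan is to start from the Caccioppoli-type inequality of Lemma \ref{lem:cacci Xu} and control the $|Tu|^2$ term it produces by means of the reverse inequality of Corollary \ref{cor:Rev''}. Write
$$I = \int_{\Omega}\eta^2 G(|\X u|)^{\gamma+1}\F(|\X u|)|\XX u|^2\dx,\qquad T = \int_{\supp(\eta)} G(|\X u|)^{\gamma+1}|\X u|^2\F(|\X u|)\dx$$
for the target left- and right-hand sides. Since $|\X\eta|^2+|\eta T\eta|\leq K_\eta$ is supported in $\supp(\eta)$, Lemma \ref{lem:cacci Xu} already gives
$$I \leq cK_\eta\, T + c(\gamma+1)^4\int_\Om \eta^2 G(|\X u|)^{\gamma+1}\F(|\X u|)|Tu|^2\dx,$$
so only the last integral requires further attention.

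The crux is a domain-splitting tuned to Corollary \ref{cor:Rev''}. For a free parameter $\om\geq 1$ to be chosen later, set $E_1=\{\eta|Tu|\leq \sqrt{\om K_\eta}\,|\X u|\}$ and $E_2 = \Om\setminus E_1$. On $E_1$, the pointwise bound $\eta^2|Tu|^2\leq \om K_\eta|\X u|^2$ gives
$$\int_{E_1} \eta^2 G(|\X u|)^{\gamma+1}\F(|\X u|)|Tu|^2\dx\leq \om K_\eta\, T.$$
On $E_2$, monotonicity of $G$ lets us replace $G(|\X u|)$ by $G(\eta|Tu|/\sqrt{\om K_\eta})$, and $|Tu|\leq 2|\XX u|$ from \eqref{eq:T-D} handles the remaining $|Tu|^2$; this manufactures exactly the left-hand side of Corollary \ref{cor:Rev''}:
$$\int_{E_2}\eta^2 G(|\X u|)^{\gamma+1}\F(|\X u|)|Tu|^2\dx\leq 4\int_\Om \eta^2 G\bigg(\frac{\eta|Tu|}{\sqrt{\om K_\eta}}\bigg)^{\gamma+1}\F(|\X u|)|\XX u|^2\dx.$$
Applying Corollary \ref{cor:Rev''} bounds the latter by $4c^{(\gamma+1)/2}(\gamma+1)^{(\gamma+1)(1+g_0)}\om^{-(\gamma+1)/2}\,I$, and combining everything produces
$$I\leq cK_\eta\bigl(1+(\gamma+1)^4\om\bigr)T + c(\gamma+1)^{4}\cdot c^{(\gamma+1)/2}(\gamma+1)^{(\gamma+1)(1+g_0)}\om^{-(\gamma+1)/2}\,I.$$

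It remains to choose $\om$ so that the coefficient of $I$ on the right is $\leq 1/2$. Taking $\om = A(\gamma+1)^{2(1+g_0)}$ with $A=A(n,g_0,L)$ large enough accomplishes this: the corresponding $\om^{(\gamma+1)/2}$ equals $A^{(\gamma+1)/2}(\gamma+1)^{(\gamma+1)(1+g_0)}$, whose geometric prefactor $A^{(\gamma+1)/2}$ dominates the polynomial residue $c(\gamma+1)^{4}$ for every $\gamma\geq 1$. Absorbing $\tfrac{1}{2}I$ into the left-hand side and substituting the chosen $\om$ yields
$$I\leq cK_\eta(\gamma+1)^{4+2(1+g_0)}T = cK_\eta(\gamma+1)^{6+2g_0}T\leq cK_\eta(\gamma+1)^{10(1+g_0)}T,$$
as desired. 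The main obstacle is precisely this final calibration: $\om$ must be large enough to beat the factor $(\gamma+1)^{(\gamma+1)(1+g_0)}$ emerging from Corollary \ref{cor:Rev''}, yet still polynomial in $(\gamma+1)$, so that the $\om K_\eta$ contribution from $E_1$ does not overshoot the target exponent. The stated $10(1+g_0)$ is in fact quite generous; the argument actually delivers $(\gamma+1)^{6+2g_0}$.
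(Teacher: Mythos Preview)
Your proof is correct and reaches the same conclusion, but the route differs from the paper's in one interesting respect. After invoking Lemma~\ref{lem:cacci Xu}, the paper handles the $|Tu|^2$ integral by applying Young's inequality \eqref{eq:YI} with the auxiliary N-function $\Phi(s)=\om K_\eta\,s\,G(\sqrt{s})^{\gamma+1}$, splitting the integrand into a term $Z_1$ (involving $G(\eta|Tu|/\sqrt{\om K_\eta})^{\gamma+1}$, to which Corollary~\ref{cor:Rev''} applies) and a term $Z_2$ (controlled via the conjugate inequality $\Phi^*(\Phi(t)/t)\leq\Phi(t)$ of Lemma~\ref{lem:comp prop}). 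You bypass the Orlicz machinery entirely with the pointwise level-set decomposition $E_1=\{\eta|Tu|\leq\sqrt{\om K_\eta}\,|\X u|\}$ and its complement: on $E_1$ the bound is trivial, and on $E_2$ monotonicity of $G$ directly converts $G(|\X u|)$ into $G(\eta|Tu|/\sqrt{\om K_\eta})$. The two splittings are morally the same---your $E_1$ and $E_2$ contributions correspond to the paper's $Z_2$ and $Z_1$ respectively---but your argument is more elementary, needing neither conjugate functions nor Lemma~\ref{lem:comp prop}. It also yields the sharper exponent $(\gamma+1)^{6+2g_0}$ that you note, whereas the paper's bookkeeping (an extra doubling step inside $Z_1$) produces a somewhat larger power before relaxing to $10(1+g_0)$.
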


\begin{proof}
First, we recall the Caccioppoli type estimate of Lemma \ref{lem:cacci Xu}, 
\begin{equation}\label{eq:cacci Xu}
\begin{aligned}
\int_{\Om}\eta^2G(|\X u|)^{\gamma+1}\weight|\XX u|^2 \dx&\leq c K_\eta\int_\Om G(|\X u|)^{\gamma+1}|\X u|^2\weight dx\\
&\quad+ c\,(\gamma+1)^4\int_\Om\eta^2G(|\X u|)^{\gamma+1}\weight| Tu|^2\dx,
\end{aligned}
\end{equation}
where $ K_\eta = \| \X \eta\|_{L^\infty(\Omega)}^2+\|\eta T\eta\|_{L^\infty(\Omega)} $ and $ c = c(n,g_0,L) >0$. Thus, to complete the proof, 
we require an estimate of the second integral of the right hand side of the above.

To this end, let us denote 
\begin{equation}\label{eq:Phi}
\Phi(s) = \om K_\eta \,s\,G(\sqrt{s})^{\gamma+1}
\end{equation}
where $\om \geq 1$ is a constant which shall be specified later. Let $\Phi^*$ be the conjugate of $\Phi$. We estimate the last integral of 
\eqref{eq:cacci Xu} using the Young's inequality \eqref{eq:YI}, as follows;
\begin{equation*}
\begin{aligned}
c(\gamma+1)^4&\int_\Omega \eta^2 G(|\X u|)^{\gamma+1}\weight| Tu|^2\dx\\
&\leq \int_\Omega \Phi\bigg(c(\gamma+1)^4\frac{\eta^2|Tu|^2}{\om K_\eta}\bigg)\weight \dx+ \int_\Omega \Phi^*\Big( \om K_\eta G(|\X u|)^{\gamma+1}\Big)\weight\dx\\
&= Z_1 + Z_2
\end{aligned}
\end{equation*}
where $Z_1$ and $Z_2$ are the respective terms of the right hand side. 
Now, we estimate $Z_1$ and $Z_2$, one by one. 
First, using \eqref{eq:Phi}, doubling property for $G$ and $|Tu|\leq 2|\XX u|$, notice that
\begin{equation}\label{eq:Z1f}
\begin{aligned}
Z_1 &= c(\gamma+1)^4\int_\Omega \eta^{2} |Tu|^2G\bigg(\sqrt{c}(\gamma+1)^2\frac{\eta|Tu|}{\sqrt{\om K_\eta}}\bigg)^{\gamma+1}\weight \dx\\
&\leq c^\frac{\gamma+1}{2}(\gamma+1)^{4+2(\gamma+1)(1+g_0)}\int_\Omega \eta^{2} G\bigg(\frac{\eta|Tu|}{\sqrt{\om K_\eta}}\bigg)^{\gamma+1}
 \weight|\XX u|^2\, dx
\end{aligned}
\end{equation}
for some $c = c(n,g_0,L)>0$. 
Now, we apply the estimate \eqref{eq:Rev''} from Corrollary \ref{cor:Rev''} on  the last term of \eqref{eq:Z1f}, to get that 
\begin{equation}\label{eq:Z1}
\begin{aligned}
Z_1 &\leq 
\frac{c^{\gamma+1}(\gamma+1)^{4+3(\gamma+1)(1+g_0)}}{\om^{\frac{\gamma+1}{2}}} 
 \int_\Omega 
\eta^2 G\big(|\X u|\big)^{\gamma+1}\weight|\XX u|^2 \dx \\
&= \frac{1}{2}\int_\Omega 
\eta^2 G\big(|\X u|\big)^{\gamma+1}\weight|\XX u|^2 \dx ,
\end{aligned}
\end{equation}
where $\om$ is chosen as 
\begin{equation}\label{eq:om}
\om = 2^\frac{2}{\gamma+1}
c^2(\gamma+1)^{6(1+g_0)+\frac{8}{\gamma+1}}, 
\end{equation}
where $c$ is the constant $c = c(n,g_0,L)>0$ in the first step of \eqref{eq:Z1}. 

To estimate $Z_2$, first notice that, from the inequality \eqref{eq:comp prop} 
and the definition \eqref{eq:Phi}
\begin{equation}\label{eq:ob1}
\Phi^*\Big( \om K_\eta G(|\X u|)^{\gamma+1}\Big) = 
\Phi^*\bigg(\frac{\Phi(|\X u|^2)}{|\X u|^2}\bigg) 
\leq \Phi(|\X u|^2) =  \om K_\eta |\X u|^2\,G(|\X u|)^{\gamma+1}.
\end{equation}
Using the above, we immediately have that 
\begin{equation}\label{eq:Z2}
Z_2 
\leq \om K_\eta\int_\Om G(|\X u|)^{\gamma+1}|\X u|^2\weight dx.
\end{equation}
Combining \eqref{eq:Z1} and \eqref{eq:Z2} with $\om$ as in \eqref{eq:om}, we finally 
end up with 
\begin{equation*}
\begin{aligned}
c(\gamma+1)^4\int_\Omega &\eta^2 G(|\X u|)^{\gamma+1}\weight| Tu|^2\dx\leq \frac{1}{2}\int_\Omega 
\eta^2 G\big(|\X u|\big)^{\gamma+1}\weight|\XX u|^2 \dx\\
&\qquad + c(\gamma+1)^{6(1+g_0)+\frac{8}{\gamma+1}}K_\eta\int_\Om G(|\X u|)^{\gamma+1}|\X u|^2\weight dx
\end{aligned}
\end{equation*}
for some $c = c(n,g_0,L)>0$. This, together with \eqref{eq:cacci Xu}, is enough 
to conclude the proof. 
\end{proof}

The following local estimate for the vertical derivative is an immediate consequence of 
the horizontal estimate of Proposition \ref{prop:horizontal estimate} and Corrollary \ref{cor:Rev''}, 
with the use of $ |Tu| \leq 2|\XX u| $.

\begin{Cor}\label{cor:vertical estimate}
 If $ u\in HW^{1,G}(\Om) $ is a weak solution of equation \eqref{eq:maineq}, then
 for any $\gamma\geq 1$ and all non-negative $\eta\in C^\infty_0(\Omega)$, we have
 the following estimate.
 \begin{equation*}
  \begin{aligned}
\int_\Omega \eta^{2} G\bigg(\frac{\eta|Tu|}{\sqrt{K_\eta}}\bigg)^{\gamma+1} \weight |T u|^2\dx
   \leq c(\gamma)K_\eta \int_{\supp(\eta)} G(|\X u|)^{\gamma+1}|\X u|^2\weight \dx
  \end{aligned}
 \end{equation*} 
where $ K_\eta = \| \X \eta\|_{L^\infty(\Omega)}^2+\|\eta T\eta\|_{L^\infty(\Omega)} $ and 
$ c(\gamma) = c(n,g_0,L,\gamma) >0$ is a constant.
\end{Cor}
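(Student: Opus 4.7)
The plan is to chain together the three ingredients highlighted in the statement: the pointwise bound $|Tu|\le 2|\XX u|$ from \eqref{eq:T-D}, the reverse-type estimate of Corollary \ref{cor:Rev''} (applied with a fixed choice of the auxiliary parameter $\om$), and the improved Caccioppoli inequality of Proposition \ref{prop:horizontal estimate}. The structure of the target inequality already matches what Corollary \ref{cor:Rev''} produces on the left-hand side, up to the trivial replacement of $|\XX u|^2$ by $|Tu|^2$, so the work is purely bookkeeping.

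First I would use the commutator identity \eqref{eq:T-D} to write
\begin{equation*}
\int_\Omega \eta^{2} G\bigg(\frac{\eta|Tu|}{\sqrt{K_\eta}}\bigg)^{\gamma+1} \F(|\X u|)\, |T u|^2\dx \,\leq\, 4\int_\Omega \eta^{2} G\bigg(\frac{\eta|Tu|}{\sqrt{K_\eta}}\bigg)^{\gamma+1} \F(|\X u|)\, |\XX u|^2\dx.
\end{equation*}
This produces exactly the integrand appearing on the left-hand side of \eqref{eq:Rev''}, with the parameter $\om=1$. Applying Corollary \ref{cor:Rev''} with this choice of $\om$ then bounds the right-hand side above by
\begin{equation*}
c^{\frac{\gamma+1}{2}}(\gamma+1)^{(\gamma+1)(1+g_0)} \int_\Omega \eta^2\, G(|\X u|)^{\gamma+1}\,\F(|\X u|)\,|\XX u|^2 \dx,
\end{equation*}
where $c=c(n,g_0,L)>0$.

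Finally I would feed the resulting integral into Proposition \ref{prop:horizontal estimate}, which dominates it by $c(\gamma+1)^{10(1+g_0)}K_\eta\int_{\supp(\eta)} G(|\X u|)^{\gamma+1}|\X u|^2\F(|\X u|)\dx$. Absorbing all the $\gamma$-dependent factors into a single constant $c(\gamma) = c(n,g_0,L,\gamma)>0$ yields exactly the claimed estimate. There is no real obstacle here: both Corollary \ref{cor:Rev''} and Proposition \ref{prop:horizontal estimate} have already done the genuine analytic work (Young's inequality with $\Psi(s)=\tau G(\sqrt s)^{\gamma+1}$ and $\Phi(s)=\om K_\eta s\,G(\sqrt s)^{\gamma+1}$ respectively), and the role of this corollary is just to repackage their conclusions so that the vertical derivative $|Tu|^2$ appears as the outer weight, which is precisely the form needed to run the iteration toward Theorem \ref{thm:mainthm}.
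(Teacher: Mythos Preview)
Your argument is correct and follows exactly the route the paper indicates: the paper states (without writing out the details) that the estimate is an immediate consequence of Proposition \ref{prop:horizontal estimate} and Corollary \ref{cor:Rev''} together with $|Tu|\le 2|\XX u|$, and you have carried this out precisely by taking $\om=1$ in \eqref{eq:Rev''} and then invoking Proposition \ref{prop:horizontal estimate}.
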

\subsection{Proof of Theorem \ref{thm:mainthm}}\label{subsec:Proof of theorem}\noindent
\\
We recall that all the estimates above, rely on the apriori assumptions \eqref{eq:ass1} and \eqref{eq:ass2}. We prove Theorem \ref{thm:mainthm} here in three steps; 
first by assuming both \eqref{eq:ass1} and 
\eqref{eq:ass2}, then by removing them one by one.
\begin{proof}[Proof of Theorem \ref{thm:mainthm}]
First note that, it is enough to establish the estimate \eqref{eq:locbound} to finish the proof. If \eqref{eq:locbound} holds apriori for a weak solution $ u \in HW^{1,G}(\Omega) $ of \eqref{eq:maineq}, then 
monotonicity of $g$  
immediately implies $|\X u| \in L^\infty(B_{\sigma r}) $ along with the estimate 
 $$  \sup_{B_{\sigma r}}|\X u|
 \leq \max\left\{\ 1\,,\, 
 \frac{c(n,g_0,\delta,L)}{g(1)(1-\sigma)^Q}\vint_{B_r} G(|\X u|)\dx \ \right\}.$$
 {\it Step 1 : We assume both \eqref{eq:ass1} and \eqref{eq:ass2}.}
 
 The estimate \eqref{eq:locbound} follows from Proposition \ref{prop:horizontal estimate} by standard Moser's iteration. Here, we provide a brief outline. 
 Letting $ w = G(|\X u|) $, note that from \eqref{eq:gG2} 
 $$ |\X w|^2 \leq  
 |\X u|^2\weight^2 |\XX u|^2 \leq (1+g_0) w\, \weight |\XX u|^2,$$ 
and hence, from Proposition \ref{prop:horizontal estimate} we obtain
\begin{equation}\label{eq:moser}
\int_\Omega \eta^2 w^\gamma  |\X w |^2\,dx \leq  
 c\,(\gamma+1)^{10(1+g_0)}\, K_\eta\int_{\supp(\eta)} w^{\gamma+2}\,dx 
\end{equation} 
for some $ c = c(n,g_0,L) >0$ and $ K_\eta = 
\| \X \eta\|_{L^\infty(\Omega)}^2+\|\eta T\eta\|_{L^\infty(\Omega)} $. 
Now we use a standard choice of test function $ \eta \in C^\infty_0(B_r) $ such that 
$ 0\leq \eta \leq 1$ and $\eta \equiv 1$ in $B_{r'}$ for $ 0<r'< r$, 
$$
|\X \eta|\leq 4/(r-r') \quad \text{and}\quad |\XX \eta| \leq 16n/(r-r')^2.$$ 
Letting   
$ \kappa = Q/(Q-2)$ and using Sobolev's inequality \eqref{eq:sob emb} for $ q =2$
on \eqref{eq:moser}, we get that
$$ \left(\int_{B_{r'}}w^{(\gamma+2)\kappa}\,dx \right)^\frac{1}{\kappa}\leq  
\ \frac{c(\gamma+2)^{12(1+g_0)}}{(r-r')^2}\int_{B_r} w^{\gamma+2}\,dx $$ 
for every $\gamma\geq 1$. Iterating this with $\gamma_i = 3\kappa^i-2$ and 
$ r_i = \sigma r + (1-\sigma)r/2^i $, 
we get 
$$  \sup_{B_{\sigma r}}\ w \leq \frac{c}{(1-\sigma)^{Q/3}}\bigg(\intav_{B_r}
  w^3\,dx\bigg)^\frac{1}{3}$$
 for $c = c(n,g_0,L)>0$ and this holds for every $B_r\subset\Om$ and every $0<\sigma<1$. 
Then, a standard interpolation argument (see \cite{Dib-Tru}, p. 299--300) leads to 
$$  \sup_{B_{\sigma r}}\ w \leq \frac{c(q)}{(1-\sigma)^{Q/q}}\bigg(\intav_{B_r}
  w^q\,dx\bigg)^\frac{1}{q} $$
for every $q>0$ and some $c(q)=c(n,g_0,L,q)>0$. Taking $q=1$, we get the estimate \eqref{eq:locbound}.\noindent
\\\\
{\it Step 2 : We assume \eqref{eq:ass1} but remove \eqref{eq:ass2}.}

Let $B_r = B_r(x_0) \subset \Om$ be a fixed CC-ball. 
Given the weak solution $ u \in HW^{1,G}(\Omega) $, there exists 
a smooth approximation $\phi_m \in C^\infty(B_r)$ such that $\phi_m \to u$ in 
$HW^{1,G}(B_r)$ as $m\to\infty$. By virtue of equivalence with the  
Kor\`anyi metric, it is possible to find a concentric ball $K_{\theta r}\subset\subset B_r$ with respect to the norm \eqref{eq:norm}, 
for some constant $\theta = \theta(n)>0$.

Now, let $u_m$ be the weak solution of the following Dirichlet problem,
\begin{equation}\label{eq:ADprob}
\begin{cases}
&\dvh (\A(\X u_m))= \ 0 \quad  \text{ in } K_{\theta r}\\
& u_m-\phi_m\in HW^{1,G}_0(K_{\theta r}).
\end{cases}
\end{equation}
The choice of test function $u_m-\phi_m$ on \eqref{eq:ADprob}, yields
\begin{equation}\label{eq:el1}
 \int_{K_{\theta r}}\inp{\A(\X u_m)}{\X u_m}\dx 
=  \int_{K_{\theta r}}\inp{\A(\X u_m)}{\X \phi_m}\dx
\end{equation}
Now, there exists $k=c(g_0,L)>1$ such that combining ellipticity \eqref{eq:elliptic} and structure condition \eqref{eq:str cond diff}, one has 
$\inp{\A(z)}{z} \geq (2/k)\,|z||\A(z)| $. 
Using this along with \eqref{eq:str cond diff} and doubling property of $g$, we estimate the right hand side of 
\eqref{eq:el1}, as 
\begin{equation*}
  \begin{aligned}
   \int_{K_{\theta r}}\inp{\A(\X u_m)}{\X \phi_m}\dx 
   & = \int_{|\X u_m|\geq k|\X \phi_m|}\inp{\A(\X u_m)}{\X \phi_m}\dx 
   + \int_{|\X u_m|< k|\X \phi_m|}\inp{\A(\X u_m)}{\X \phi_m}\dx\\
   &\leq \frac{1}{k}\int_{K_{\theta r}} |\A(\X u_m)||\X u_m|\dx 
   \,+\, \int_{|\X u_m|< k|\X \phi_m|}L \,g(|\X u_m|)\,|\X \phi_m|\dx\\
   &\leq \frac{1}{2}\int_{K_{\theta r}}\inp{\A(\X u_m)}{\X u_m}\dx 
   \,+\,  k^{g_0}L \int_{K_{\theta r}}g(|\X \phi_m|)\,|\X \phi_m|\dx.
  \end{aligned}
 \end{equation*}
Combining the above with \eqref{eq:el1} and using \eqref{eq:elliptic}, we get 
\begin{equation}\label{eq:el2}
 \int_{K_{\theta r}} G(|\X u_m|)\dx\leq c\int_{K_{\theta r}} G(|\X \phi_m|)\dx
\leq c\int_{K_{\theta r}} G(|\X u|)\dx+ o(1/m)
\end{equation}
for $c = c(n,g_0,L) >0$ and $o(1/m) \to 0$ as $m\to\infty$. Now,  
since $\phi_m$ is smooth and $K_{\theta r}$ 
(defined by norm \eqref{eq:norm}) satisfies the strong convexity condition 
\eqref{eq:str conv}, the equation \eqref{eq:ADprob} is an example of the Dirichlet problem 
\eqref{eq:Dprob}. From Proposition \ref{prop:lip existence}, we have that 
$$ \| \X u_m\|_{L^\infty(K_{\theta r})} \leq M $$ 
which is the assumption \eqref{eq:ass2} for $u_m$. Now we can apply Step 1 and 
conclude 
\begin{equation}\label{eq:epsG}
 \sup_{B_{\sigma \tau r}}\ G(|\X u_m|)
 \leq \frac{c}{(1-\sigma)^Q}\intav_{B_{\tau r}}G(|\X u_m|)\dx
\end{equation}
for some $c = c(n,g_0,L) >0,\ \sigma\in(0,1)$ and $\tau = \tau(n)>0$ chosen 
such that $B_{\tau r} \subset K_{\theta r}$. 
This is followed up with standard argument, since \eqref{eq:el2} ensures 
that there exists $\tilde u \in HW^{1,G}(K_{\theta r})$ such that 
upto a subsequence $u_m \wto \tilde u$. 
Since, $u_m-\phi_m \in  HW^{1,G}_0(K_{\theta r})$, hence we have 
$\tilde u-u\in HW^{1,G}_0(K_{\theta r})$ and 
combined with the monotonicity \eqref{eq:monotone}, one can show $\tilde u$ is a weak solution of \eqref{eq:maineq}. From uniqueness, 
$\tilde u = u$. Taking $m\to\infty$ in 
\eqref{eq:epsG} and \eqref{eq:el2}, we conclude
$$ \sup_{B_{\sigma \tau r}}\ G(|\X u|)
 \leq \frac{c}{(1-\sigma)^Q}\intav_{B_r}G(|\X u|)\dx $$
and \eqref{eq:locbound} follows from a simple covering argument. \noindent
\\\\
{\it Step 3: We remove both \eqref{eq:ass2} and \eqref{eq:ass1}.}

The assumption \eqref{eq:ass1} is removed by a standard approximation argument.  We use the regularization constructed in Lemma 5.2 of \cite{Lieb--gen}. Here, we 
give a brief outline.

For any fixed $ 0< \eps < 1$ and some $ \eta_\eps \in C^{0,1}([0,\infty))$, we  define 
\begin{equation}\label{eq:regularization}
\F_\eps(t)=  \F\Big(\min\{\,t+\eps\,,\,1/\eps\,\}\Big) 
\quad\text{and}\quad 
\Aeps(z) =  \eta_\eps(|z|)\F_\eps(|z|)\,z + \Big(1-\eta_\eps(|z|)\Big)\A(z)
\end{equation}
where $\A$ is given and $\F(t) = g(t)/t$. Thus, $\F_\eps$ satisfies the assumption \eqref{eq:ass1} with 
$m_1 = \F(\eps)$ and $m_2=\F(1/\eps)$. Also, with the choice of 
$\eta_\eps$ as in \cite{Lieb--gen}(p. 343), it is possible to show that
\begin{equation}\label{eq:str cond reg}
\begin{aligned}
\frac{1}{\tilde L}\F_\eps(|z|)|\xi|^2 \leq \,&\inp{D\Aeps(z)\,\xi}{\xi}
\leq \tilde L\,\F_\eps(|z|)|\xi|^2;\\
&|\Aeps(z)|\leq \tilde L |z|\F_\eps(|z|),
\end{aligned}
\end{equation}
for some $\tilde L= \tilde L(\delta,g_0,L)>0$. 
Reducing to a subsequence
if necessary, it is easy to see that $\Aeps \to \A$ uniformly and 
$\F_\eps \to \F$ uniformly on compact subsets of $(0,\infty)$, as $\eps \to 0$. 

Given weak solution $u\in HW^{1,G}(\Om)$ of \eqref{eq:maineq}, 
we consider $u_\eps$ as the weak solution of the following regularized equation 
\begin{equation}\label{eq:dirichlet prob reg}
 \begin{cases}
  -\dvh (\Aeps(\X u_\eps))=  0\ \ \text{in}\ \Om';\\
 \ \ \ u_\eps - u\in HW^{1,G}_0(\Om'), 
 \end{cases}
\end{equation}
for any $\Om'\subset\subset \Om$. Now, we are able to apply Step 2, to obtain 
uniform estimates for $u_\eps$. Taking limit 
$\eps \to 0$, we can obtain 
\eqref{eq:locbound}. This concludes the proof.
\end{proof}



\bibliographystyle{plain}
\bibliography{LipH}

\end{document}